\documentclass{article}

\usepackage[utf8]{inputenc}
\usepackage{amsmath}
\usepackage{amsthm}
\usepackage{amssymb}
\usepackage{fontspec}
\usepackage{color}
\usepackage[pdfusetitle,
 bookmarks=true,bookmarksnumbered=false,bookmarksopen=false,
 breaklinks=true,pdfborder={0 0 0},pdfborderstyle={},backref=false,colorlinks=true]
 {hyperref}
\hypersetup{
 colorlinks=true,linkcolor=violet,urlcolor=cyan,citecolor=magenta}

\usepackage{tikz-cd}
\makeatletter
\theoremstyle{plain}
\newtheorem{thm}{\protect\theoremname}[section]
\theoremstyle{plain}

\usepackage{fullpage}
\date{}
\AtBeginDocument{}

\makeatother

\usepackage{polyglossia}
\setdefaultlanguage[variant=british]{english}
\usepackage[style=alphabetic,maxbibnames=12,maxalphanames=6]{biblatex}

\providecommand{\theoremname}{Theorem}

\usepackage{mathtools}
\usepackage{microtype}
\usepackage{enumitem}
\usepackage{aliascnt}
\usepackage{tikz-cd}
\usepackage[capitalise,
nameinlink
]{cleveref}

\setlist[enumerate]{label=(\roman*),leftmargin=2.2em}
\setlist[itemize]{leftmargin=2.0em}
\allowdisplaybreaks

\crefname{thm}{theorem}{theorems}
\Crefname{thm}{Theorem}{Theorems}
\crefname{conj}{conjecture}{conjectures}
\Crefname{conj}{Conjecture}{Conjectures}

\newaliascnt{ex}{thm}
\newtheorem{ex}[ex]{Example}
\aliascntresetthe{ex}

\crefname{example}{example}{examples}
\Crefname{example}{Example}{Examples}

\newaliascnt{proposition}{thm}
\newtheorem{proposition}[proposition]{Proposition}
\aliascntresetthe{proposition}

\newaliascnt{lemma}{thm}
\newtheorem{lemma}[lemma]{Lemma}
\aliascntresetthe{lemma}

\newaliascnt{corollary}{thm}
\newtheorem{corollary}[corollary]{Corollary}
\aliascntresetthe{corollary}

\theoremstyle{remark}
\newaliascnt{remark}{thm}
\newtheorem{remark}[remark]{Remark}
\aliascntresetthe{remark}
\theoremstyle{plain}

\newaliascnt{conjecture}{thm}
\newtheorem{conjecture}[conjecture]{Conjecture}
\aliascntresetthe{conjecture}

\crefname{conjecture}{conjecture}{conjectures}
\Crefname{conjecture}{Conjecture}{Conjectures}

\providecommand{\im}{\operatorname{im}}
\providecommand{\coker}{\operatorname{coker}}
\providecommand{\id}{\operatorname{id}}

\providecommand{\tensor}{\otimes}

\begin{document}
\global\long\def\rg{R\llbracket G\rrbracket}%
\global\long\def\fp#1{\mathrm{FP}_{#1}}%
\global\long\def\Ker{\ker}%
\global\long\def\operatorname#1{\mathrm{#1}}%
\global\long\def\F{\mathbb{F}}%
\global\long\def\Z{\mathbb{Z}}%
\global\long\def\scarytimes{\boxtimes}%
\global\long\def\di{\dim}%
\global\long\def\dimf{\dim_{\mathbb{F}}}%
\global\long\def\d{\mathrm{d}}%
\global\long\def\llb{\llbracket}%
\global\long\def\rrb{\rrbracket}%
\global\long\def\Hom{\mathrm{Hom}}%
\global\long\def\rep#1{\mathcal{S}^{#1}}%
\global\long\def\supG#1{\sup_{\text{simple }#1\llbracket G\rrb\text{-modules }M}}%
\global\long\def\supRG{\supG R}%
\global\long\def\cb#1#2{\tau_{#1}(#2)}%
\global\long\def\cbf#1#2#3{\tau_{#1}(#2;#3)}%
\global\long\def\ct{\otimes}%
\global\long\def\t{\ct_{R}}%
\global\long\def\te{\ct_{\rg}}%
\global\long\def\li{\underset{\longleftarrow}{\lim}}%
\global\long\def\torg#1{\mathrm{Tor}^{\rg}_{#1}}%
\global\long\def\fr#1{\mathcal{F}\left(#1\right)}%
\global\long\def\graph#1{\mathrm{Graph}\left(#1\right)}%
\global\long\def\FP{\operatorname{FP}}%
\global\long\def\F{\operatorname F}%

\title{Virtual Surjection and the $n$-$(n+1)$-$(n+2)$ Theorem for Discrete Groups}
\author{Tal Cohen and Mark Shusterman}
\maketitle
\begin{abstract}
We prove the Virtual Surjection Conjecture for discrete groups, both
for property $F_{n}$ and for property $FP_{n}$: given a product
of groups of type $F_k$ (respectively $FP_k$), a subgroup that virtually surjects
onto $k$-tuples must be $F_k$ (respectively $FP_k$) as well. We prove the homological
$n$-$(n+1)$-$(n+2)$ Conjecture for discrete groups under the assumption
the common quotient is finitely presented, and prove that this assumption
cannot be dropped. We deduce the $n$-$(n+1)$-$(n+2)$ Conjecture
for property $F_{n}$.
\end{abstract}

\section{Introduction}

A group $G$ is of type $\F_{n}$ if it admits a classifying space
$K(G,1)$ with finite $n$-skeleton. A group is $\F_{1}$ if and only
if it is finitely generated, and $\F_{2}$ if and only if it is finitely
presented. A group $G$ is of type $\FP_{n}$ if the trivial $\Z G$-module
$\Z$ admits a projective resolution which is finitely generated in
degrees $0,\dots,n$. Property $\F_{n}$ implies property $\FP_{n}$;
for finitely presented groups, the two properties are equivalent.
It is a classical result of Bestvina and Brady \cite{BestvinaBrady97}
that this fails for non-finitely-presented groups.

Let $2\leqslant k\leqslant m$ be integers, let $G_{1},\ldots,G_{m}$
be groups, and let $P\leqslant G_{1}\times\cdots\times G_{m}$. We
say that $P$ \emph{virtually surjects }on $k$ factors if, for every
$1\leqslant i_{1}<\cdots<i_{k}\leqslant m$, the image of $P$ in
$G_{i_{1}}\times\cdots\times G_{i_{k}}$ has finite index. In \cite{Kuc14},
Kuckuck gave the following form of the Virtual Surjection Conjecture.
See also \cite{Koc10} and \cite{Dis08}.
\begin{conjecture}[Virtual Surjection Conjecture]
\label{conj:VirSurF}Let $2\leqslant k\leqslant m$ be integers, let
$G_{1},\ldots,G_{m}$ be groups of type $\F_{k}$, and let $P\leqslant G_{1}\times\cdots\times G_{m}$
be a subgroup that virtually surjects on $k$ factors. Then $P$ is
of type $\F_{k}$. 
\end{conjecture}

In \cite{KocLim}, Kochloukova and Lima gave the following homological
Virtual Surjection Conjecture.
\begin{conjecture}[Homological Virtual Surjection Conjecture]
\label{conj:VirSurFP}Let $2\leqslant k\leqslant m$ be integers,
let $G_{1},\ldots,G_{m}$ be groups of type $\FP_{k}$, and let $P\leqslant G_{1}\times\cdots\times G_{m}$
be a subgroup that virtually surjects on $k$ factors. Then $P$ is
of type $\FP_{k}$. 
\end{conjecture}

\Cref{conj:VirSurF} was established for $k=2$ in \cite{BHMS}.
Since property $\FP_{n}$ and $\F_{n}$ are equivalent for finitely
presented groups, this means that \Cref{conj:VirSurFP}
implies \Cref{conj:VirSurF}. Later, \Cref{conj:VirSurFP} was proved for $k=2$ in \cite{KocLim}. 
In this paper, we establish
both conjectures.
\begin{thm}
\label{thm:VirtualSurjection}\Cref{conj:VirSurFP} and
\Cref{conj:VirSurF} are true. Namely, given integers $2\leqslant k\leqslant m$,
groups $G_{1},\ldots,G_{m}$ of type $\F_{k}$ (respectively $\FP_{k}$),
and a subgroup $P\leqslant G_{1}\times\cdots\times G_{m}$ that virtually
surjects on $k$ factors, $P$ is of type $\F_{k}$ (respectively
$\FP_{k}$).
\end{thm}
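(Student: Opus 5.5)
The plan is to prove the homological statement first and then deduce the homotopical one. Since $k\geqslant 2$, virtual surjection on $k$ factors implies virtual surjection on pairs. A group of type $\F_k$ is in particular finitely presented, so the $k=2$ case \cite{BHMS} shows that $P$ is finitely presented. For finitely presented groups $\F_k$ and $\FP_k$ agree, so \Cref{conj:VirSurFP} gives \Cref{conj:VirSurF}. It therefore suffices to prove that $P$ is $\FP_k$ when all $G_i$ are $\FP_k$.

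For the homological statement I would first make some reductions. Finite-index passages preserve $\FP_k$ in both directions, and the hypothesis is unchanged by them. So I replace each $G_i$ by the (finite-index) projection of $P$, and intersect with finite-index subgroups, until $P$ is subdirect and surjects onto every $k$-fold product. Set $N_i=P\cap G_i\trianglelefteq G_i$ and $N=N_1\times\cdots\times N_m\trianglelefteq P$. The standard commutator argument (as in the $1$-$2$-$3$ and virtual surjection literature) then shows two things:
\begin{enumerate}
\item each $G_i/N_i$ is virtually nilpotent;
\item the quotient $Q=P/N$ is a finitely generated virtually nilpotent group, hence virtually polycyclic, and so $\Z Q$ is Noetherian.
\end{enumerate}

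The core is a Lyndon--Hochschild--Serre argument for $1\to N\to P\to Q\to 1$, checked via the Bieri--Eckmann criterion. That criterion says $P$ is $\FP_k$ iff $H_j(P,-)$ commutes with direct products of $\Z P$-modules for $j<k$ and is surjective onto the product for $j=k$. By the Künneth formula, $H_*(N;\Z)$ (and its version with product coefficients) is built from tensor products of the $H_{j_i}(N_i)$. A given summand $H_{j_1}(N_1)\otimes\cdots\otimes H_{j_m}(N_m)$ of total degree $\leqslant k$ has at most $k$ nonzero indices $j_i$ unless some $j_i=0$. In that case the corresponding factor is $\Z$, and the whole summand should be controlled by the projection of $P$ to the remaining $\leqslant k$ factors. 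That projection is of finite index in a product of $\FP_k$ groups, hence itself $\FP_k$.

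Concretely, I would show by induction on total degree that each such term is finitely generated as a $\Z Q$-module and that the relevant Tor's commute with products. Here I would use the Noetherianity of $\Z Q$, and the fact that $H_j(N_i)$ is a finitely generated $\Z[G_i/N_i]$-module for $j<k$, which holds since $G_i$ is $\FP_k$ and $G_i/N_i$ is Noetherian. Finally, because $Q$ is virtually polycyclic and therefore of type $\FP_\infty$, the spectral sequence assembles these into the product criterion for $P$.

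The main obstacle is this middle step: the $N_i$ themselves are not $\FP_k$, and the $\Z Q$-action on the Künneth summands mixes the factors. To get finite generation over $\Z Q$, rather than merely over each $\Z[G_i/N_i]$, I would have to use the virtual surjection on $k$ factors quantitatively. My first attempt would test everything against simple modules via completions, using the $\rg$-type machinery the paper sets up. The idea is to reduce the product criterion to uniform bounds $\sup\dim\mathrm{Tor}_j(M,-)$ over simple modules $M$, where multiplicativity under tensor products is easy to track, and then induct on $m$ with base case $m=k$, in which $P$ is of finite index in $G_1\times\cdots\times G_m$.
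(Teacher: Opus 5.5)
The gap is the homological core, which your proposal identifies but does not supply.

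Your deduction of the $\F_k$ statement from the $\FP_k$ one is exactly the paper's: virtual surjection on $k$-tuples gives it on pairs, so $P$ is finitely presented by the case $k=2$ in \cite{BHMS}, and $\F_k=\FP_k$ for finitely presented groups. Your preliminary reductions are also standard and fine: $N_i=P\cap G_i$, virtual nilpotence of $G_i/N_i$ and of $Q=P/N$, and finite generation of $H_j(N_i;\Z)$ over $\Z[G_i/N_i]$ for $j\leqslant k$.

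The trouble starts after that. Your K\"unneth/Noetherianity argument can at best give finite generation over $\Z Q$ of the summands of $H_*(N;\Z)$, and that is only a necessary condition for $P\in\FP_k$. To push the Bieri--Eckmann criterion through the Lyndon--Hochschild--Serre spectral sequence, you must compare $H_q(N;\prod_\lambda M_\lambda)$ with $\prod_\lambda H_q(N;M_\lambda)$, for instance with $M_\lambda=\Z P$, which restricts to a free $\Z N$-module. Such comparisons are governed by the finiteness properties of $N$ itself. These are exactly what the $N_i$ lack: they are not $\FP_k$ and need not even be finitely generated. Finite generation of $H_q(N;\Z)$ over $\Z Q$ does not make the extension $\FP_k$, even when $Q$ is of type $\FP_\infty$; take $Q=1$ and $N$ an infinitely generated acyclic group. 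So ``the relevant Tor's commute with products'' and ``the spectral sequence assembles these'' restate the conclusion rather than prove it.

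Your proposed remedy does not work for discrete groups. Testing against simple modules of a completed group algebra and bounding $\sup\dim\mathrm{Tor}_j$ is a profinite technique, and the paper sets up no such machinery. Simple modules of a discrete group ring are in general infinite-dimensional, so such dimension bounds are unavailable. Moreover, $\FP_k$ of a discrete group is not detected by finite quotients or finite modules. Already finite presentability is not a profinite invariant. The Platonov--Tavgen fibre products $F\times_Q F\hookrightarrow F\times F$, with $Q$ finitely presented, infinite, without finite quotients and with $H_2(Q;\Z)=0$, induce isomorphisms of profinite completions, yet $F\times_Q F$ is not finitely presented.

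What you are missing is a mechanism that produces finite generation despite the infinitely generated kernels. The paper does not attack $P$ directly. It uses \cite[Theorem~F]{KocLim}, building on \cite[Theorem~3.12]{Kuc14}, to reduce \Cref{conj:VirSurFP} to the homological $n$-$(n+1)$-$(n+2)$ statement with $Q$ virtually nilpotent, hence finitely presented; that is, to \Cref{thm:mainthm}.

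That theorem is proved in three moves. First it reduces, via \Cref{thm:KL-transfer}, to $G\times_Q F$ with $F$ free. Then it truncates a Wall resolution of $P$ at degree $n+1$. In top degree it keeps only finitely many of the possibly infinitely many generators coming from $N$, chosen to absorb the chain homotopies for conjugation by generators of $P$ and by lifts of the finitely many relators of $Q$. This forces $R$ to act trivially on $H_n$ of the truncation (\Cref{prop:normal-centralizer}). Finally, a spectral sequence comparison with $G$, which is where $Q\in\FP_{n+2}$ enters (\Cref{KerFinGen}), gives the finite generation that \Cref{lem:partial-resolution} requires.
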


The standard route to virtual surjection is through fibre products.
Given epimorphisms $G_{1}\to Q$ and $G_{2}\to Q$, their fibre product
is 
\[
G_{1}\times_{Q}G_{2}=\{(g_{1},g_{2})\in G_{1}\times G_{2}\mid\phi_{1}(g_{1})=\phi_{2}(g_{2})\}.
\]
Kuckuck \cite{Kuc14} formulated the following $n$-$(n+1)$-$(n+2)$
conjecture, and showed that, if it holds whenever $Q$ is nilpotent,
then the Virtual Surjection Conjecture holds in general \cite[Theorem 3.12]{Kuc14}.
\begin{conjecture}[The $n$-$(n+1)$-$(n+2)$ Conjecture]
\label{conj:nF}Let $n$ be a nonnegative integer, and let 
\[
1\to N_{1}\to G_{1}\to Q\to1,\qquad1\to N_{2}\to G_{2}\to Q\to1,
\]
be short exact sequences of groups, where $N_{1}$ is $\F_{n}$, $G_{1}$
and $G_{2}$ are $\F_{n+1}$ and $Q$ is $\F_{n+2}$. Then the fibre
product $G_{1}\times_{Q}G_{2}$ is $\F_{n+1}$.
\end{conjecture}

The case $n=0$ is known as the $0$-$1$-$2$ Lemma, see \cite[Lemma 3.8]{Kuc12}. The case $n=1$ was proved in \cite{Dis08,BHMS08} (see \cite{BHMS} for an effective version), and is known as the Asymmetric $1$-$2$-$3$ Theorem. The Symmetric $1$-$2$-$3$ Theorem is the same result for $n=1$ and two identical short exact sequences; it was proved it \cite{BBMS00}.

Kochloukova and Lima \cite{KocLim} gave the corresponding homological conjecture, which implies \Cref{conj:nF}.

\begin{conjecture}[The Homological $n$-$(n+1)$-$(n+2)$ Conjecture]
\label{conj:nFP}Let $n$ be a nonnegative integer, and let 
\[
1\to N_{1}\to G_{1}\to Q\to1,\qquad1\to N_{2}\to G_{2}\to Q\to1,
\]
be short exact sequences of groups, where $N_{1}$ is $\FP_{n}$,
$G_{1}$ and $G_{2}$ are $\FP_{n+1}$ and $Q$ is $\FP_{n+2}$. Then
the fibre product $G_{1}\times_{Q}G_{2}$ is $\FP_{n+1}$.
\end{conjecture}

We establish \Cref{conj:nFP} under the additional assumption
$Q$ is finitely presented, and show that it is false without it (for
every $n$). 
\begin{thm}
\label{thm:mainthm}Let $n$ be a nonnegative integer, and let 
\[
1\to N_{1}\to G_{1}\to Q\to1,\qquad1\to N_{2}\to G_{2}\to Q\to1,
\]
be short exact sequences of groups, where $N_{1}$ is $\FP_{n}$,
$G_{1}$ and $G_{2}$ are $\FP_{n+1}$ and $Q$ is $\FP_{n+2}$ and finitely presented. Then
the fibre product $G_{1}\times_{Q}G_{2}$ is $\FP_{n+1}$.
\end{thm}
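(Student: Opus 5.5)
We will prove \Cref{thm:mainthm} by reducing the finiteness of $P=G_{1}\times_{Q}G_{2}$ to a statement about relative homology, and then use finite presentability of $Q$ to control the relevant coinvariants.

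\textbf{Step 1: reduce to a free-product-by-normal-closure situation.} Since $Q$ is finitely presented, choose a finite presentation $Q=F/R$ with $F$ finitely generated free and $R$ the normal closure of finitely many relators. Lift the generators of $F$ to $G_{1}$ and $G_{2}$. Since $N_{i}$ (at least $N_{1}$) is finitely generated as a normal subgroup (from $G_{1}$ being $\FP_{n+1}$ and $Q$ being finitely presented), we can write $G_{i}$ as a quotient of $F*\tilde N_{i}$ compatibly. The fibre product $P$ contains $N_{1}\times N_{2}$, with $P/(N_{1}\times N_{2})\cong Q$. Consider the extension
\[
1\to N_{1}\to P\to G_{2}\to1 .
\]
Since $G_{2}$ is $\FP_{n+1}$, a standard spectral sequence / mapping-cone criterion (Bieri's criterion for extensions) shows that it suffices to prove that $H_{j}(N_{1};\Z G_{2}\otimes-)$, or more precisely the homology of $N_{1}$ with coefficients in $\prod$-type modules, behaves well. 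Concretely, we use Brown's/Bieri's criterion: $P$ is $\FP_{n+1}$ iff $\mathrm{Tor}^{\Z P}_{j}(\Z,\prod \Z P)\to\prod\mathrm{Tor}_{j}$ is an isomorphism for $j\leqslant n$ and an epimorphism for $j=n+1$.

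\textbf{Step 2: compare $P$ with $G_{1}$ via the LHS spectral sequence.} Using $1\to N_{1}\to P\to G_{2}\to1$ and $1\to N_{1}\to G_{1}\to Q\to1$, the $E^{2}$-pages are $H_{p}(G_{2};H_{q}(N_{1};M))$ and $H_{p}(Q;H_{q}(N_{1};M))$, where the $G_{2}$-action on $H_{q}(N_{1};-)$ factors through $Q$. For $q\leqslant n-1$ the modules $H_{q}(N_{1};\Z P\otimes\cdots)$ commute with direct products since $N_{1}$ is $\FP_{n}$. The key comparison is that $H_{p}(G_{2};V)$ and $H_{p}(Q;V)$ for a $Q$-module $V$ pulled back along $G_{2}\to Q$: by the spectral sequence of $1\to N_{2}\to G_{2}\to Q\to 1$, these differ by terms $H_{s}(Q;H_{t}(N_{2})\otimes V)$, $t\ge1$. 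We then run an induction on $n$ (the statement for smaller $n$ gives that the lower rows are fine), so only the corner terms $E^{2}_{n+1-q,q}$ near total degree $n+1$ need attention.

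\textbf{Step 3 (main obstacle): the top corner.} The critical term is $q=n$, $p=1$ and $q=n,p=0$: we need $H_{0}(G_{2};H_{n}(N_{1};\prod\Z P))$ to be controlled, i.e.\ $H_{n}(N_{1})$ need only be finitely generated as a $\Z Q$-module-up-to-coinvariants. Here we use that $G_{1}$ is $\FP_{n+1}$ and $Q$ is $\FP_{n+2}$: from the LHS sequence for $G_{1}$, $H_{0}(Q;H_{n}(N_{1};\prod))$ is controlled by $d_{2}:H_{2}(Q;H_{n-1})\to H_{0}(Q;H_{n})$, and this requires $Q$ to be $\FP_{n+2}$ for the $E^{2}_{n+2,0}$-type differentials. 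The obstruction is passing from $Q$-coinvariants to $G_{2}$-coinvariants and in degree $1$: $H_{1}(G_{2};V)\to H_{1}(Q;V)$ is surjective with kernel a quotient of $H_{0}(Q;N_{2}^{ab}\otimes V)$, and to kill the Mittag-Leffler/product defect one needs $H_{1}$ and $H_{2}$ of $Q$ with product coefficients to commute with products, i.e.\ $Q$ finitely presented (rather than just $\FP_{2}$) so that the relation module of $G_{2}$ over $Q$ is handled by a finite set of normal generators of $N_{2}$. This is exactly where the finite presentation hypothesis enters, and I expect verifying that the relevant map is onto (the $j=n+1$ epimorphism condition) to be the hardest step; I would attempt it by explicitly building a partial free resolution of $\Z$ over $\Z P$ from one over $\Z G_{1}$ and the finite presentation of $Q$, checking finiteness in degree $n+1$ directly.
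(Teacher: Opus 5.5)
\textbf{There is a genuine gap.} The proposal never proves the one statement that carries all the content, and the tools it offers for that statement do not work.

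\emph{What actually has to be shown.} For $n\geqslant 1$, your set-up already pins it down. Take the Lyndon--Hochschild--Serre spectral sequence of $1\to N_1\to P\to G_2\to 1$ with coefficients $\prod\Z P$. The rows $1\leqslant q\leqslant n-1$ vanish, since $N_1\in\FP_n$ and $\Z P$ is $\Z N_1$-free. The bottom row $H_p(G_2;\prod\Z G_2)$ vanishes for $1\leqslant p\leqslant n$, since $G_2\in\FP_{n+1}$. Hence $P\in\FP_{n+1}$ is equivalent to surjectivity of the transgression
\[
d_{n+1}\colon H_{n+1}\Bigl(G_2;\prod\Z G_2\Bigr)\to H_0\Bigl(G_2;H_n\Bigl(N_1;\prod\Z P\Bigr)\Bigr).
\]
The ``$j=n+1$ epimorphism condition'' you single out as hardest is vacuous, because $\mathrm{Tor}^{\Z P}_{n+1}(\Z,\Z P)=0$; the real condition is $H_n(P;\prod\Z P)=0$. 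Step 3 contains no argument for this surjectivity.

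\emph{Why the proposed tools fail.} First, the $G_2$-action on $H_q(N_1;\prod\Z P)$ does \emph{not} factor through $Q$. Already $H_0(N_1;\prod\Z P)\cong\prod\Z G_2$, on which $N_2\leqslant G_2$ acts by left multiplication. So the comparison in Step 2 between $H_p(G_2;V)$ and $H_p(Q;V)$ for $Q$-modules $V$ does not apply to the coefficients you need.

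Second, and more seriously, you let finite presentability of $Q$ enter only through homology of $Q$ with product coefficients commuting with products in degrees $1,2$. That is a homological finiteness property, already implied by $Q\in\FP_3$. The $\FP_\infty$ but non-finitely-presented $Q$ of \Cref{ex:counter} has it, yet the conclusion fails there. So no argument that uses finite presentation only in that way can be correct. It has to be used group-theoretically, as finite normal generation of a relation subgroup. Your remark about finitely many normal generators of $N_2$ points in that direction, but nothing in the proposal turns it into a mechanism.

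\emph{How the paper supplies that mechanism.}
It first uses \Cref{thm:KL-transfer} to reduce to $G_2=F_X$ free of finite rank. Then $\ker(P\to G_1)=(1,\mu)(R)$, where $R=\ker(F_X\to Q)$ is normally generated by a finite set $\Sigma$.

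It builds compatible Wall resolutions: $W^P$ from $N\to P\to F_X$, which has only two columns, and $W^G$ from $N\to G\to Q$. The columns $p\leqslant 1$ of $W^G$ are $\Z G\otimes_{\Z P}$ of those of $W^P$.

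It truncates at degree $n+1$. There it keeps only the finitely many generators of $W^P_{0,n+1}$ met by chain homotopies between $\iota$ and $\iota\circ T^s$, for $s$ in a finite generating set of $P$ and in $(1,\mu)(\Sigma)$.

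This forces $H_n(A^P_*)$ to be generated by $N$-cycles, with $(1,\mu)(R)$ acting trivially on it (\Cref{prop:normal-centralizer}). Hence $H_n(A^P_*)\cong\Z G\otimes_{\Z P}H_n(A^P_*)$.

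A comparison of the filtration spectral sequences then shows $H_n(A^P_*)\to H_n(A^G_*)$ is onto with finitely generated kernel, using $N\in\FP_n$ and $Q\in\FP_{n+2}$. Finally, $H_n(A^G_*)$ is finitely generated because $G\in\FP_{n+1}$.

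This chain-level device, which makes the kernel of $P\to G_1$ act trivially in degree $n$ using finitely many normal generators, is the idea missing from your proposal.
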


\begin{ex}\label{ex:counter}
For every nonnegative integer $n$ there exist short exact sequences
\[
1\to\ker p_{n}\to A_{n}\xrightarrow{p_{n}}Q\to1,\qquad1\to\ker\pi\to F\xrightarrow{}Q\to1,
\]
such that $Q$, $F$ and $A_{n}$ are $\mathrm{FP}_{\infty}$ and
$\ker p_{n}$ is $\mathrm{FP}_{n}$, but the fibre product $A_{n}\times_{Q}F$
is not $\mathrm{FP}_{n+1}$.
\end{ex}

The case $n=1$ of \Cref{thm:mainthm} was proved in \cite{KocLim}. The case $n=0$ of \Cref{ex:counter} is the example from \cite[Proposition 6.13]{Kuc12}.

Since property $\F_{n}$ and $\FP_{n}$ are equivalent for finitely
presented groups, \Cref{thm:mainthm}, together with the $1$-$2$-$3$ Theorem from \cite{BHMS}, imply the $n$-$(n+1)$-$(n+2)$ Conjecture (\Cref{conj:nF}).
\begin{corollary}[The $n$-$(n+1)$-$(n+2)$ Theorem]
    \Cref{conj:nF} is true.
\end{corollary}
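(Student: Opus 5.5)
We deduce the corollary from \Cref{thm:mainthm}, using the $1$-$2$-$3$ Theorem of \cite{BHMS} to supply the finite presentability that \Cref{thm:mainthm} needs.

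Fix $n\geqslant0$ and short exact sequences as in \Cref{conj:nF}. Thus $N_{1}$ is $\F_{n}$, $G_{1},G_{2}$ are $\F_{n+1}$, $Q$ is $\F_{n+2}$, and we must show that $P=G_{1}\times_{Q}G_{2}$ is $\F_{n+1}$.

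\emph{The cases $n=0$ and $n=1$.} For $n=0$ the statement is the $0$-$1$-$2$ Lemma \cite[Lemma 3.8]{Kuc12}. For $n=1$ it is the Asymmetric $1$-$2$-$3$ Theorem \cite{Dis08,BHMS08,BHMS}. So assume $n\geqslant2$.

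\emph{Step 1: $P$ is finitely presented.} Because $n\geqslant2$:
\begin{itemize}
\item $N_{1}$ is $\F_{n}$, hence $\F_{1}$;
\item $G_{1}$ and $G_{2}$ are $\F_{n+1}$, hence $\F_{2}$;
\item $Q$ is $\F_{n+2}$, hence $\F_{3}$.
\end{itemize}
These are exactly the hypotheses of the $1$-$2$-$3$ Theorem, so $P$ is finitely presented.

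\emph{Step 2: $P$ is $\FP_{n+1}$.} Property $\F_{k}$ implies property $\FP_{k}$. Hence $N_{1}$ is $\FP_{n}$, $G_{1}$ and $G_{2}$ are $\FP_{n+1}$, and $Q$ is $\FP_{n+2}$. Moreover $Q$ is finitely presented, since it is $\F_{n+2}$ with $n+2\geqslant2$. All hypotheses of \Cref{thm:mainthm} therefore hold, and $P$ is $\FP_{n+1}$.

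\emph{Step 3: conclusion.} By Step 1, $P$ is finitely presented. For finitely presented groups, $\FP_{n+1}$ is equivalent to $\F_{n+1}$. Combined with Step 2, this gives that $P$ is $\F_{n+1}$.

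The only point needing care is that \Cref{thm:mainthm} yields only homological finiteness. Upgrading it to $\F_{n+1}$ requires knowing separately that $P$ is finitely presented, and this is exactly what the $1$-$2$-$3$ Theorem provides once $n\geqslant1$. The rest of the argument is bookkeeping of the hypotheses.
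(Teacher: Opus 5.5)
Your proposal is correct and follows the paper's argument. The asymmetric $1$-$2$-$3$ Theorem gives finite presentability of $G_1\times_Q G_2$, \Cref{thm:mainthm} (applicable because $\mathrm{F}_k$ implies $\mathrm{FP}_k$ and $Q$ is finitely presented) gives $\mathrm{FP}_{n+1}$, and the equivalence of $\mathrm{F}_{n+1}$ and $\mathrm{FP}_{n+1}$ for finitely presented groups concludes; your handling of $n=0$ through the $0$-$1$-$2$ Lemma is the same as in the paper's proof of \Cref{thm:mainthm}.
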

Since finitely generated virtually nilpotent groups are finitely presented,
\Cref{thm:mainthm} also implies \Cref{conj:nFP}
under the stronger assumption that $Q$ is virtually nilpotent. 
In \cite[Theorem F]{KocLim}, Kochloukova and Lima showed that, if
\Cref{conj:nFP} holds under the assumption that $Q$ is virtually
nilpotent, then the homological Virtual Surjection Conjecture (\Cref{conj:VirSurFP}) holds in general, which in turn implies the ordinary Virtual Surjection
Conjecture (\Cref{conj:VirSurF}). In other words,
\Cref{thm:mainthm} implies \Cref{thm:VirtualSurjection}. The
bulk of this paper is therefore dedicated to establishing
\Cref{thm:mainthm}.

The Virtual Surjection Conjecture and the $n$-$(n+1)$-$(n+2)$ Conjecture have been considered for property $\FP_n$ of profinite groups. They were established for pro-$p$ groups in \cite{propnn1n2} and for general profinite groups in \cite{CoSu26}. These questions have also been considered in other settings, for example for Lie algebras in \cite{Koc19}.

As a consequence of Theorem 1.3, the conditional result \cite[Theorem D]{KJ22} is now unconditional.
\begin{remark}
    Given a commutative ring with unit $R$, a group $G$ is $\FP_{n}$ over $R$ if the trivial $RG$-module $R$ admits a projective resolution which is finitely generated in degrees $0,\dots,n$. Property $\mathrm{FP}_n$ over $\Z$ implies property $\mathrm{FP}_n$ over $R$ for every $R$. The same proofs show that \Cref{thm:VirtualSurjection,thm:mainthm} remain true for property $\mathrm{FP}_n$ over any $R$.
\end{remark}

\paragraph{Organisation of the paper.} In \Cref{sec:prelim}, we recall some basic homological algebra facts. Throughout \Crefrange{wall-resolutions}{free-second-factor}, we specialise to the case where $G_2$ is a free group of finite rank. In \Cref{wall-resolutions}, we construct resolutions for the trivial modules of the groups involved in the fibre product, following \cite{Wall}. In \Cref{truncation}, we truncate the Wall resolution obtained for the fibre product group, and establish the desired finiteness conditions on it. In \Cref{free-second-factor}, we establish \Cref{thm:mainthm} in the case the second factor is free, and then in \Cref{general-fibre} we deduce the general version, following \cite{KocLim}. In \Cref{sec:counter}, we establish \Cref{ex:counter}.

\paragraph{Acknowledgements.}We developed the main arguments in the body of this paper in the course of interactions with GPT. All proofs were worked out in detail, verified, and written by the authors.

Both authors are co-funded by the European Union (ERC, Function Fields, 101161909). Views and opinions expressed are however those of the authors only and do not necessarily reflect those of the European Union or the European Research Council. Neither the European Union nor the granting authority can be held responsible for them. The second author is The Dr. A. Edward Friedmann Career Development Chair in Mathematics.

\section{Preliminaries on finite resolutions}\label{sec:prelim}

\begin{lemma} \label{lem:partial-resolution}
Let $R$ be a ring, let $M$ be a left $R$-module, let $m$ be a positive integer, and let
\begin{equation} \label{FirstProjResM}
C_{m-1} \xrightarrow{\partial_{m-1}} C_{m-2} \xrightarrow{\partial_{m-2}} \cdots \xrightarrow{\partial_1}
C_0 \xrightarrow{\partial_0} M \xrightarrow{} 0
\end{equation} 
be a (partial, augmented) resolution of $M$ by finitely generated projective $R$-modules.  
Then $M \in \FP_m$ if and only if $\ker \partial_{m-1}$ is finitely generated as an $R$-module.

\end{lemma}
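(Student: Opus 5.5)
The plan is to prove both directions directly from the definition of $\FP_m$ for a module: $M$ is $\FP_m$ if it admits a projective resolution that is finitely generated in degrees $0,\dots,m$. The key tool is a generalised Schanuel lemma, which I state as follows. Suppose we have two exact sequences
\[
0\to K\to P_{m-1}\to\cdots\to P_0\to M\to0
\]
and
\[
0\to K'\to P'_{m-1}\to\cdots\to P'_0\to M\to0
\]
with all $P_i,P'_i$ projective. Then
\[
K\oplus P'_{m-1}\oplus P_{m-2}\oplus\cdots\;\cong\;K'\oplus P_{m-1}\oplus P'_{m-2}\oplus\cdots,
\]
where the projectives alternate between the two sequences. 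I will prove this by induction on $m$. The case $m=1$ is the classical Schanuel lemma, proved via the pullback $P_0\times_M P'_0$. For the inductive step, apply the $m=1$ case to the first stage. This gives $\ker(P_0\to M)\oplus P'_0\cong \ker(P'_0\to M)\oplus P_0$. Then add the projective $P'_0$ (respectively $P_0$) to $P_1$ (respectively $P'_1$) as a direct summand, so that both truncated sequences resolve isomorphic modules. Finally, apply induction.

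For the direction "$\ker\partial_{m-1}$ finitely generated $\Rightarrow M\in\FP_m$", choose a finitely generated free module $C_m$ surjecting onto $\ker\partial_{m-1}$. Splicing it in extends the given partial resolution by one term. Continuing with arbitrary projectives in higher degrees yields a projective resolution of $M$ that is finitely generated in degrees $0,\dots,m$, so $M\in\FP_m$.

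For the converse, suppose $M\in\FP_m$. Take a projective resolution $P_\bullet\to M$ with $P_0,\dots,P_m$ finitely generated. Then $K'=\ker(P_{m-1}\to P_{m-2})$ is the image of $P_m$, hence finitely generated. Apply the generalised Schanuel lemma to this truncation and to the given partial resolution~\eqref{FirstProjResM}, whose top kernel is $K=\ker\partial_{m-1}$. This gives $K\oplus(\text{f.g.\ projectives})\cong K'\oplus(\text{f.g.\ projectives})$. The right-hand side is finitely generated, and $K$ is a direct summand, hence a quotient, of it. Therefore $K$ is finitely generated.

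The only step with real content is the generalised Schanuel lemma, and specifically the bookkeeping in its inductive step. There, one replaces $P_1$ by $P_1\oplus P'_0$ with the map extended by the identity onto the added summand. One must check that exactness is preserved and that the new kernel in degree $1$ is unchanged. This is routine. The rest is splicing and the observation that quotients of finitely generated modules are finitely generated.
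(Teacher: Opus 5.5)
Your proposal is correct and follows the paper's proof: for one direction, splice a finite free module onto the finitely generated kernel; for the converse, apply the generalised Schanuel lemma to the truncation of a resolution that is finitely generated in degrees $0,\dots,m$, then use that a direct summand of a finitely generated module is finitely generated. The only difference is that the paper cites the generalised Schanuel lemma from Lam (Corollary 5.5), whereas you prove it by induction from the classical Schanuel lemma; your inductive bookkeeping is sound.
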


\begin{proof}

If $\ker \partial_{m-1}$ is finitely generated, there exists a finite-rank free $R$-module
$C_m$ and an epimorphism $C_m\twoheadrightarrow \ker \partial_{m-1}$.  
Composing this epimorphism with the inclusion $\ker \partial_{m-1} \hookrightarrow C_{m-1}$ we get the resolution
\begin{equation*} 
C_m \xrightarrow{} C_{m-1} \xrightarrow{\partial_{m-1}} C_{m-2} \xrightarrow{\partial_{m-2}} \cdots \xrightarrow{\partial_1}
C_0 \xrightarrow{\partial_0} M \xrightarrow{} 0
\end{equation*} 
of $M$ by finitely generated projective $R$-modules, so $M\in\FP_m$ as desired.

Suppose now that $M\in\FP_m$, so it admits a (partial, augmented) projective resolution
\begin{equation} \label{SecProjResM}
P_m \xrightarrow{d_m} P_{m-1} \xrightarrow{d_{m-1}} P_{m-2} \xrightarrow{d_{m-2}} \cdots \xrightarrow{d_1}
P_0 \xrightarrow{d_0} M \xrightarrow{} 0
\end{equation}
by finitely generated (projective) $R$-modules. In particular, we have the finitely generated $R$-module
\[
\ker d_{m-1} = \im d_m.
\]
Generalized Schanuel's lemma
\cite[Corollary 5.5]{Lam} applied to \cref{FirstProjResM} and \cref{SecProjResM} provides us with finitely generated $R$-modules $U_1,U_2$ such that
\[
\ker \partial_{m-1} \oplus U_1\cong \ker d_{m-1} \oplus U_2.
\]
The right hand side is a finitely generated $R$-module, so $\ker \partial_{m-1}$, being a direct summand of a finitely generated $R$-module, is also a finitely
generated $R$-module, as required.
\end{proof}

\begin{corollary}
\label{cor:extend-partial-resolution}

Let $R$ be a ring, let $M$ be an $R$-module, let $m$ be a nonnegative integer with $M\in\FP_m$, and let $0 \leq r \leq m$.
Then every (augmented, partial)  resolution of $M$ by finite-rank free $R$-modules
\begin{equation*}
C_{r} \xrightarrow{\partial_{r}} C_{r-1} \xrightarrow{\partial_{r-1}} \cdots \xrightarrow{\partial_1}
C_0 \xrightarrow{\partial_0} M \xrightarrow{} 0
\end{equation*} 
can be extended to an augmented partial resolution of $M$ by finite-rank free $R$-modules
\begin{equation*}
C_m \xrightarrow{\partial_m}  \cdots \xrightarrow{\partial_r} C_{r} \xrightarrow{\partial_{r}} C_{r-1} \xrightarrow{\partial_{r-1}} \cdots \xrightarrow{\partial_1}
C_0 \xrightarrow{\partial_0} M \xrightarrow{} 0.
\end{equation*} 
\end{corollary}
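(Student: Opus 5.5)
We argue by induction on the length of the resolution, adding one free module at a time using \Cref{lem:partial-resolution}. If $r=m$ there is nothing to do. Otherwise suppose we already have a partial resolution
\[
C_{s} \xrightarrow{\partial_{s}} C_{s-1} \to \cdots \to C_0 \xrightarrow{\partial_0} M \to 0
\]
by finite-rank free $R$-modules, with $r\le s<m$, extending the given one. We will produce $C_{s+1}$ and $\partial_{s+1}$, and repeat until $s=m$.

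For the extension step, view the complex as a partial resolution of length $s+1$ in the sense of \Cref{lem:partial-resolution}, i.e.\ take the lemma's parameter to be $s+1$, so that its top module is $C_s$. All the modules are finitely generated projective, since they are finite-rank free. Since $s+1\le m$ and $M\in\FP_m$, we have $M\in\FP_{s+1}$: truncating a projective resolution that is finitely generated in degrees $0,\dots,m$ keeps it finitely generated in degrees $0,\dots,s+1$. The lemma then says that $\ker\partial_s$ is finitely generated. Choose finitely many generators, let $C_{s+1}$ be the free module on that many generators, and let $\partial_{s+1}$ be the surjection $C_{s+1}\twoheadrightarrow\ker\partial_s$ composed with the inclusion $\ker\partial_s\hookrightarrow C_s$. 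Then $\im\partial_{s+1}=\ker\partial_s$, so exactness at $C_s$ holds and the lengthened complex is again a partial resolution by finite-rank free modules. The lower terms are unchanged, so it still extends the original resolution.

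One edge case needs care. When $r=0$ the given data is only $C_0\to M\to 0$, and we must first check that this is exact, namely that $\partial_0$ is surjective. This is part of what "resolution" means here, so we assume it. The lemma is then applied with parameter $1$, which is allowed because the lemma only requires a positive integer. Apart from this there is no real obstacle: the only substantive input is the direction "$\FP_m\Rightarrow$ kernel finitely generated" of \Cref{lem:partial-resolution}, which rests on Schanuel's lemma. The induction stops after $m-r$ steps and yields the required resolution up to $C_m$.
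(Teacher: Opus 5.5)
Your proposal is correct and follows essentially the same route as the paper. The paper phrases it as a descending induction on $r$ rather than your ascending loop on $s$, but the step is identical: from $M\in\FP_{r+1}$ and \cref{lem:partial-resolution}, $\ker\partial_r$ is finitely generated, and you then map a finite-rank free module onto it.
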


\begin{proof}

We perform a descending induction on $r$, the case $r = m$ being obvious.
Suppose that $r < m$, so $M \in \FP_{r+1}$, hence it follows from \cref{lem:partial-resolution} that $\ker \partial_r$ is a finitely generated $R$-module.
Therefore there exists a finite-rank free $R$-module $C_{r+1}$ and a surjective homomorphism of $R$-modules $C_{r+1} \to \ker \partial_r$. Composing this with the inclusion of $\ker \partial_r$ into $C_r$ we get an augmented partial resolution
\begin{equation*}
C_{r+1} \xrightarrow{\partial_{r+1}} C_{r} \xrightarrow{\partial_{r}} C_{r-1} \xrightarrow{\partial_{r-1}} \cdots \xrightarrow{\partial_1}
C_0 \xrightarrow{\partial_0} M \xrightarrow{} 0
\end{equation*} 
of $M$ by finite-rank free $R$-modules, and conclude by applying the induction hypothesis.
\end{proof}

\begin{remark}

The argument in the proof can be used to complete the partial augmented free resolution of $M$ with the free $R$-modules in degrees exceeding $m$ not necessarily of finite rank.
This will be done silently in applications of \cref{cor:extend-partial-resolution}.

\end{remark}

For a set $X$ we denote by $F_X$ the free group on $X$.
For a ring $R$ and a set $I$, we have the functor $-^{(I)}$ associating to a left $R$-module $M$ the left $R$-module $M^{(I)}$ which is the direct sum of copies of $M$ indexed by $I$.
For a homomorphism of groups  $\pi \colon G \to H$ we denote by $\bar \pi \colon \Z G \to \Z H$ the ring homomorphism it induces on the associated group rings.
It endows $\Z H$ with the structure of a left $\Z G$-module.

The following is a consequence of the proof that finitely presented groups are $\FP_2$ and \cref{cor:extend-partial-resolution}.

\begin{lemma} \label{lem:prescribed}

Let $m \geq 3$ be an integer, let $X$ be a finite set, and let 
\[
1 \xrightarrow{} R \xrightarrow{} F_X \xrightarrow{\pi} Q \xrightarrow{} 1
\]
be a short exact sequence of groups with $Q \in \FP_m$ a finitely presented group.
Then there exists a finite $\Sigma \subseteq R$ generating $R$ as a normal subgroup of $F_X$, and a commutative diagram with exact rows
\[\begin{tikzcd}
	&& 0 & {\mathbb ZF_X^{(X)}} & {\mathbb Z F_X} & {\mathbb Z} & 0 \\
	{\mathbb Z Q^{(\dots)}} & \dots & {\mathbb ZQ^{(\Sigma)}} & {\mathbb ZQ^{(X)}} & {\mathbb ZQ} & {\mathbb Z} & 0
	\arrow[from=1-3, to=1-4]
	\arrow["{d_1^F}", from=1-4, to=1-5]
	\arrow["{\bar{\pi}^{(X)}}", from=1-4, to=2-4]
	\arrow["{\varepsilon^F}", from=1-5, to=1-6]
	\arrow["{\bar{\pi}}", from=1-5, to=2-5]
	\arrow[from=1-6, to=1-7]
	\arrow["{\mathrm{id}_\mathbb Z}", from=1-6, to=2-6]
	\arrow["{d_m^Q}", from=2-1, to=2-2]
	\arrow["{d_3^Q}", from=2-2, to=2-3]
	\arrow["{d_2^Q}", from=2-3, to=2-4]
	\arrow["{d_1^Q}", from=2-4, to=2-5]
	\arrow["{\varepsilon^Q}", from=2-5, to=2-6]
	\arrow[from=2-6, to=2-7]
\end{tikzcd}\]
the first (respectively, second) row consisting of finite-rank left $\Z F_X$-modules (respectively, $\Z Q$-modules), $\varepsilon^F$ being the augmentation map to the trivial module $\Z$, for every $x \in X$ we have $d_1^F(e_x) = x-1$.

\end{lemma}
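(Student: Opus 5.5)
The plan is to build the bottom row in low degrees from the standard presentation resolution, compare it with the top row, and then invoke \cref{cor:extend-partial-resolution} for the higher degrees.

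\emph{Top row.} For a free group on a finite set $X$, the sequence $0\to \Z F_X^{(X)}\xrightarrow{d_1^F}\Z F_X\xrightarrow{\varepsilon^F}\Z\to 0$ with $d_1^F(e_x)=x-1$ is exact. This is the standard fact that the augmentation ideal of $\Z F_X$ is free on the elements $x-1$; equivalently, it is the cellular chain complex of the Cayley tree. It consists of finite-rank free modules.

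\emph{Choosing $\Sigma$.} Since $Q$ is finitely presented and $F_X$ is finitely generated, $R$ is the normal closure in $F_X$ of a finite set; this is the standard fact that finite presentability does not depend on the finite generating set. Fix such a finite $\Sigma\subseteq R$.

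\emph{Bottom row in degrees $\le 2$.} Define $d_1^Q(e_x)=\pi(x)-1$ and let $\varepsilon^Q$ be the augmentation. Then $d_1^Q\circ\bar\pi^{(X)}=\bar\pi\circ d_1^F$, since $\bar\pi(x-1)=\pi(x)-1$. The outer square commutes because $\varepsilon^Q\circ\bar\pi=\varepsilon^F$. Exactness at $\Z Q$ holds because the elements $\pi(x)-1$ generate the augmentation ideal of $\Z Q$, as $\pi(X)$ generates $Q$.

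For $d_2^Q$, use Fox calculus. For $r\in\Sigma$, the fundamental formula in $\Z F_X$ gives $r-1=\sum_x \frac{\partial r}{\partial x}(x-1)$. Set
\[
d_2^Q(e_r)=\sum_{x\in X}\bar\pi\Big(\frac{\partial r}{\partial x}\Big)e_x .
\]
Applying $\bar\pi$ and using $\pi(r)=1$ yields $d_1^Q d_2^Q=0$.

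The main point is exactness at $\Z Q^{(X)}$. This is the classical theorem that $\Z Q^{(\Sigma)}\to\Z Q^{(X)}\to\Z Q\to\Z\to 0$ is exact for any presentation $\langle X\mid\Sigma\rangle$, as in the proof that finitely presented groups are $\FP_2$. I would cite it, for instance via the cellular chains of the universal cover of the presentation complex, whose $1$-skeleton is the Cayley graph and whose $2$-cells are attached along the lifts of the relators. Alternatively, one can argue algebraically. The relation module $R^{ab}$ is generated as a $\Z Q$-module by the images of $\Sigma$, because $\Sigma$ normally generates $R$. Then the Lyndon/Fox exact sequence $0\to R^{ab}\to\Z Q^{(X)}\to\Z Q\to\Z\to0$ gives surjectivity of $\Z Q^{(\Sigma)}\to R^{ab}=\ker d_1^Q$.

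\emph{Higher degrees.} We now have an augmented partial resolution of $\Z$ by finite-rank free $\Z Q$-modules up to degree $2$. Since $Q\in\FP_m$ and $2\le m$, \cref{cor:extend-partial-resolution} with $r=2$ extends it to degrees $3,\dots,m$ by finite-rank free modules, and silently further by the remark after it. This produces the whole diagram.
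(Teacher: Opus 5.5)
Your proposal is correct and follows the paper's route. The paper gets degrees $\le 2$ from the presentation complex of $\langle X \mid \Sigma\rangle$, with the bouquet of circles supplying the top row and the comparison map; the cellular chains of its universal cover are exactly your Fox-calculus complex. As the sentence preceding the lemma indicates, the paper then gets finite rank in degrees $3,\dots,m$ from \cref{cor:extend-partial-resolution}, a step your write-up makes explicit where the paper's one-line CW-complex sketch leaves it implicit.
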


\begin{proof}
    For $Q$, take a bouquet of circles indexed by $X$, add $2$-cells corresponding to the relations in $\Sigma$, and add $n$-cells for $n\ge 2$ to get an aspherical CW-complex. Then the embedding of the bouquet of $X$ circles induces the desired diagram.
\end{proof}

\begin{lemma} \label{lem:conjugation-lift}
Let $N$ be a group, let $\sigma\in\operatorname{Aut}(N)$, and consider an augmented free resolution
\[
\cdots\longrightarrow C_2\xrightarrow{d_2}C_1
 \xrightarrow{d_1}C_0\xrightarrow{d_0} \Z \xrightarrow{} 0.
\]
of the trivial \emph{left} $\Z N$-module. Then there is an augmented chain map of abelian groups $c_\sigma \colon C_*\to C_*$ such that for every $a \in N$, every nonnegative integer $q$, and every $x \in C_q$ we have
\[
c_\sigma(a x)= \sigma(a)c_\sigma(x).
\]
\end{lemma}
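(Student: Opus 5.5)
The plan is to twist the resolution by $\sigma$ and apply the comparison theorem. Let $\sigma^*C_*$ denote $C_*$ with the new left $\Z N$-action $a\ast x=\sigma^{-1}(a)x$; likewise $\sigma^*\Z$ is $\Z$ with the twisted action. Since $\sigma^{-1}$ is a ring automorphism of $\Z N$, the complex $\sigma^*C_*$ is still a resolution by free left $\Z N$-modules: a basis of $C_q$ stays a basis, because $\Z N\cdot e=\sigma^{-1}(\Z N)e$. The differentials are unchanged, so exactness is preserved. Moreover $\sigma^*\Z=\Z$, because the trivial action stays trivial. Thus $\sigma^*C_*\to\Z\to0$ is again an augmented free resolution of the trivial module.

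Next we apply the comparison theorem. Its source will be the projective complex $C_*$ with augmentation $d_0$, and its target the acyclic complex $\sigma^*C_*$ with augmentation $d_0$. We lift the identity $\Z\to\Z$ to a chain map of $\Z N$-modules $f\colon C_*\to\sigma^*C_*$ that is compatible with the augmentations. Concretely, we define $f$ on free bases degree by degree. In degree $0$, $f_0(e)$ is any preimage under $d_0$ of $d_0(e)$; one can simply take $f_0(e)=e$. In degree $q\ge1$, for each basis element $e$ of $C_q$, the element $f_{q-1}(d_q e)$ is a cycle, so it lies in the image of $d_q$ by exactness, and we let $f_q(e)$ be a preimage. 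We then extend $\Z N$-linearly for the twisted action.

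Finally we unwind the linearity. The $\Z N$-linearity of $f$ says $f(ax)=a\ast f(x)=\sigma^{-1}(a)f(x)$. This has the wrong variance, so we run the construction with $\sigma^{-1}$ in place of $\sigma$: twist by $a\ast x=\sigma(a)x$, and obtain $c_\sigma(ax)=\sigma(a)c_\sigma(x)$. Since $c_\sigma$ is a chain map of underlying abelian groups commuting with the augmentation, this is exactly what the statement requires.

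The only point needing care is verifying that the twisted complex is again a \emph{free} resolution of the \emph{trivial} module. Freeness holds because a ring automorphism carries free modules to free modules with the same basis, and triviality holds because $\sigma(a)\cdot1=1$. I expect no real obstacle beyond this bookkeeping, and tracking the direction of the twist so that the final formula reads $\sigma(a)$ rather than $\sigma^{-1}(a)$.
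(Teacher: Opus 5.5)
Your proposal is correct and is essentially the paper's proof: the paper likewise twists the resolution so that $a$ acts as $\sigma(a)$ (its functor $M\mapsto M^\sigma$), notes this is still a free resolution of the trivial module, lifts $\mathrm{id}_{\mathbb{Z}}$ to a $\mathbb{Z}N$-linear augmented chain map $C_*\to C_*^\sigma$ by the comparison theorem, and then forgets the module structure. Your final twist $a\ast x=\sigma(a)x$ matches the paper's convention and gives the required identity $c_\sigma(ax)=\sigma(a)c_\sigma(x)$.
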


\begin{proof}

The forgetful functor from left $N$-modules to abelian groups is invariant under (precomposition by) the functor $M \mapsto M^\sigma$ of inflation by $\sigma$ from the category of left $N$-modules to itself, and the action of every $a \in N$ on each $m \in M^\sigma$ is the action of $\sigma(a)$ on $m$ viewed as an element of $M$. 
Our inflation functor is exact (and preserves free modules) so applying it to our resolution we get another such resolution.
By the comparison theorem \cite[Theorem~6.16]{Rotman}, the identity map of
$\Z$ lifts to a $\Z N$-linear augmented chain map
$
C_* \to C_*^\sigma,
$
so the required $c_\sigma$ is obtained by applying our forgetful functor.
\end{proof}


If $\sigma=\id_N$, one may choose $c_\sigma=\id_{C_*}$.


\section{Wall resolutions}\label{wall-resolutions}
\subsection{Wall resolutions of extensions}

We present here a variant of Wall's construction \cite{Wall}. Let
\[
1 \xrightarrow{}  N \xrightarrow{} H \xrightarrow{\nu}L \xrightarrow{} 1
\]
be a short exact sequence of groups. 
Let
\[
\cdots\xrightarrow{d_3^N} {\Z N}^{(X_2)} \xrightarrow{d_2^N} {\Z N}^{(X_1)}
\xrightarrow{d_1^N} \Z N^{(X_0)} \xrightarrow{\varepsilon^N} \Z \xrightarrow{} 0, \qquad X_0 = \{*\},
\]
be a free resolution of the trivial left $\Z N$-module $\Z$ with $\varepsilon^N$ the usual augmentation, and let
\[
\cdots \xrightarrow{d^L_3} {\Z L}^{(Y_2)} \xrightarrow{d_2^L} {\Z L}^{(Y_1)}
\xrightarrow{d_1^L} \Z L^{(Y_0)} \xrightarrow{\varepsilon^L} \Z \xrightarrow{} 0, \qquad Y_0 = \{*\},
\]
be a free resolution of the trivial left $\Z L$-module $\Z$ with $\varepsilon^L$ the usual augmentation.

For nonnegative integers $p,q,k$ consider the free left $\Z H$-modules
\[
W^H_{p,q}
 =\bigoplus_{y\in Y_p} \Z H\tensor_{\Z N} \Z N^{(X_q)}
 \cong \Z H^{(Y_p\times X_q)}, \qquad W^H_k=\bigoplus_{p+q=k}W^H_{p,q},
\]
note that $W_0^H = \Z H$, and equip it with its usual augmentation $\varepsilon^H$. 
We use the convention that $W_{p,q}^H = 0$ if (at least) one of $p,q$ is negative.

For integers $p, q$ let $d_{p,q}^0 \colon W_{p,q}^H \to W^H_{p,q-1}$ be the direct sum over $y \in Y_p$ of the base change (namely, extension of scalars from $\Z N$) to $\Z H$ of $d_q^N$, interpreted as the zero map if $p< 0$ or $q \leq 0$,
and set
\[
d_k^0 \colon W_k^H \to W_{k-1}^H, \qquad d^0_k = \bigoplus_{\substack{p+q = k}} d^0_{p,q}
\]
for every positive integer $k$.
These are homomorphisms of left $\Z H$-modules and for $k > 0$ we have 
\[
d^0_{k} \circ d^0_{k+1} = 0.
\]

The ring $\Z H$ is free as a right $\Z N$-module: a set of representatives for the cosets of $N$ in $H$ is a
right $\Z N$-basis. Therefore, for every nonnegative integer $p$,
tensoring our augmented $\Z N$-resolution with $\Z H$ over $\Z N$, and then taking the direct sum over $y\in Y_p$, gives an exact sequence of left $\Z H$-modules
\begin{equation} \label{eq:wall-vertical-augmentation}
\cdots \xrightarrow{d_{p,3}^0} W^H_{p,2} \xrightarrow{d^{0}_{p,2}} W^H_{p,1}
 \xrightarrow{d^0_{p,1}} W^H_{p,0}\xrightarrow{\bar{\nu}^{(Y_p)}} \Z L^{(Y_p)} \xrightarrow{} 0.
\end{equation}
Assuming $p$ is positive, we get the solid arrows of the following diagram of left $\Z H$-modules
\begin{equation} \label{ChainMapResol}
\begin{tikzcd}
	\cdots & {W^H_{p,2}} & {W^H_{p,1}} & {W^H_{p,0}} & {\mathbb Z L^{(Y_p)}} & 0 \\
	\cdots & {W^H_{p-1,2}} & {W^H_{p-1,1}} & {W^H_{p-1,0}} & {\mathbb Z L^{(Y_{p-1})}} & 0.
	\arrow["d^0_{p,3}", from=1-1, to=1-2]
	\arrow["d^0_{p,2}", from=1-2, to=1-3]
	\arrow["{f_{p,2}^H}", dotted, from=1-2, to=2-2]
	\arrow["d^0_{p,1}", from=1-3, to=1-4]
	\arrow["{f_{p,1}^H}", dotted, from=1-3, to=2-3]
	\arrow["{\bar{\nu}^{(Y_p)}}", from=1-4, to=1-5]
	\arrow["{f_{p,0}^H}", dotted, from=1-4, to=2-4]
	\arrow[from=1-5, to=1-6]
	\arrow["{d_p^L}", from=1-5, to=2-5]
	\arrow["d^0_{p-1,3}", from=2-1, to=2-2]
	\arrow["d^0_{p-1,2}", from=2-2, to=2-3]
	\arrow["d^0_{p-1,1}", from=2-3, to=2-4]
	\arrow["{\bar{\nu}^{(Y_{p-1})}}", from=2-4, to=2-5]
	\arrow[from=2-5, to=2-6]
\end{tikzcd}
\end{equation}

Choose any homomorphism $f_{p,0}^H \colon W^H_{p,0} \to W^H_{p-1,0}$ of left $H$-modules that makes the rightmost square above commute - one exists because $W^H_{p,0}$ is a free left $\Z H$-module and $\bar{\nu}^{(Y_{p-1})}$ is surjective.
Continuing in this fashion, and using diagram chasing to check the necessary inclusion of images, we can produce the dotted arrows, thus obtaining a morphism of chain complexes of left $\Z H$-modules.  

For integers $p, q$ let $d_{p,q}^1 \colon W_{p,q}^H \to W^H_{p-1,q}$ be the homomorphism $(-1)^q f_{p,q}^H$ of left $H$-modules, interpreted as the zero map if $p < 1$ or $q < 0$, so that for every positive integer $k$ we have the homomorphism
\[
d^1_k \colon W_k^H \to W_{k-1}^H, \qquad d^1_k = \bigoplus_{p+q = k} d^1_{p,q}
\]
of left $H$-modules.
From the chain map condition, for every positive integer $k$ we get that
\[
d^0_{k} \circ d^1_{k+1} + d^1_{k} \circ d^0_{k+1}=0.
\]

\begin{lemma} \label{Wall}

For every integer $i \geq 2$ and integers $p, q$ there exists a homomorphism 
\[
d^i_{p,q} \colon W^H_{p,q} \to W^H_{p-i,q+i-1}
\]
of left $H$-modules such that for the homomorphisms of left $H$-modules given for a positive integer $k$ by
\[
d_k^i \colon W^H_k \to W^H_{k-1}, \qquad d_k^i = \bigoplus_{p+q = k} d^i_{p,q}, \qquad
d_k^H \colon W^H_k \to W^H_{k-1}, \qquad d_k^H = \sum_{i=0}^k d_k^i,
\]
we get an augmented resolution of the trivial left $\Z H$-module $\Z$ by free left $\Z H$-modules 
\begin{equation} \label{PurportedResolution}
\cdots \xrightarrow{d^H_3} W^H_2 \xrightarrow{d^H_2} W^H_1 \xrightarrow{d^H_1} W^H_{0} \xrightarrow{\varepsilon^H} \Z \xrightarrow{} 0.
\end{equation}

\end{lemma}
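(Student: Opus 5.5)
We construct the maps $d^i_{p,q}$ for $i \geq 2$ by induction on $i$, following Wall's perturbation argument. Throughout we use the filtration by $p$ and the exactness of the columns \eqref{eq:wall-vertical-augmentation}. The goal is the identity $d^H_{k-1}\circ d^H_k=0$. Sorting it by the amount by which $p$ drops, it splits into the family of equations, one for each $j\ge 0$,
\[
\sum_{i=0}^{j} d^{i}\circ d^{j-i}=0 \quad\text{on each } W^H_{p,q}.
\]
The cases $j=0$ and $j=1$ are already established before the lemma. Suppose $d^0,\dots,d^{j-1}$ have been constructed with the equations holding up to $j-1$, where $j\ge2$. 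We must find $d^j_{p,q}\colon W^H_{p,q}\to W^H_{p-j,q+j-1}$ such that
\[
d^0\circ d^j_{p,q}=-\sum_{i=1}^{j-1} d^i\circ d^{j-i}-d^j\circ d^0 .
\]
The last term involves $d^j$ on $W^H_{p,q-1}$, so within fixed $p$ we induct on $q$. Set $\Phi=-\sum_{i=1}^{j-1}d^i d^{j-i}-d^j_{p,q-1}d^0_{p,q}\colon W^H_{p,q}\to W^H_{p-j,q+j-2}$. Since $W^H_{p,q}$ is free, it suffices to show that $\Phi$ lands in $\operatorname{im} d^0_{p-j,q+j-1}=\ker d^0_{p-j,q+j-2}$. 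The target column index is $q+j-2\geq 0$, and this equality of image and kernel is exactness of the column \eqref{eq:wall-vertical-augmentation} at a positive spot; when $q+j-2=0$ we instead need $\bar\nu^{(Y_{p-j})}\circ\Phi=0$.

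The main step is the standard computation showing $d^0\circ\Phi=0$. We expand $d^0 d^i d^{j-i}$ and use the lower relations $d^0d^i=-\sum_{0<l\le i}d^l d^{i-l}$ (with $d^0 d^0=0$), together with $d^0 d^j_{p,q-1}=\Phi_{q-1}$ from the $q$-induction. Everything cancels by the associativity of composition, exactly as in the usual proof that a twisting cochain squares to zero. For the boundary case $q+j-2=0$, that is $q=0$ and $j=2$, we use that $\bar\nu$ kills the images of the $d^i$ with $i\ge 2$ landing in $q=0$. The relevant identity becomes $\bar\nu d^1 d^1 = d^L d^L \bar\nu = 0$, because $f^H$ lifts $d^L$ and $d^L_{p-1}d^L_p=0$. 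I expect this bookkeeping of signs, and of the two boundary cases $q=0$ and $q+j-2=0$, to be the main obstacle. The sign $(-1)^q$ built into $d^1$ is what makes $d^1d^0+d^0d^1=0$ hold, so $(W^H,d^H)$ is a twisted tensor product.

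It remains to check that the augmented complex is exact. First, $\varepsilon^H d^H_1=0$ holds because $d^0_1$ is base-changed from $d_1^N$, whose image lies in the augmentation ideal, and $d^1_{1,0}=f^H_{1,0}$ lifts $d^L_1$, whose image $\varepsilon^L$ kills. For exactness, filter $W^H_*$ by $F_s=\bigoplus_{p\le s}W^H_{p,*}$; every $d^i$ lowers $p$ or preserves it, so this is a filtration by subcomplexes. The associated graded pieces are the columns $(W^H_{p,*},d^0)$, and by \eqref{eq:wall-vertical-augmentation} their homology is $\Z L^{(Y_p)}$ concentrated in $q=0$. The induced $E^1$-differential is $d^1$, which acts there as $d^L$. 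Hence $E^2$ is the homology of the resolution of $\Z$ over $L$, namely $\Z$ in degree $0$ and zero elsewhere. The filtration is bounded below and exhaustive in each total degree, since $p\le k$, so the spectral sequence converges and $H_*(W^H)=\Z$ in degree $0$, via $\varepsilon^H$. To avoid spectral sequences, one can instead induct on $s$ with the short exact sequences $F_{s-1}\to F_s\to F_s/F_{s-1}$ and a direct diagram chase.
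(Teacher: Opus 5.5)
Your proposal is correct and follows essentially the same route as the paper: a double induction (on the perturbation index, then on $q$) that lifts through the free module $W^H_{p,q}$ using column exactness, with the case $j=2$, $q=0$ handled via $\bar\nu d^1 d^1 = d^L d^L \bar\nu = 0$, followed by the spectral sequence of the filtration by $p$, whose $E^2$ page is concentrated at $(0,0)$. The only difference is organisational: the paper isolates the identity $d^0 A^r_{p,q} = A^r_{p,q-1} d^0$ as a separate claim before the $q$-induction, whereas you fold it into the check that $d^0\Phi = 0$ (and your remark about $\bar\nu$ killing images of $d^i$ with $i \geq 2$ in row $q=0$ is vacuous, since in that case $\Phi = -d^1 d^1$ exactly).
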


\begin{proof}

We construct by induction on $r$ the maps $d^0_{p,q}, \dots, d^r_{p,q}$ for $p,q \in \Z$ in such a way that they satisfy
\begin{equation} \label{IndDemandDiff}
\sum_{i=0}^r d^{i}_{p-r+i,q+r-i-1} \circ d^{r-i}_{p,q} = 0
\end{equation}
with $r=0$ and $r=1$ being the base case handled above.
Suppose now that $r \geq 2$ and for $p,q \in \Z$ set
\[
A^r_{p,q} \colon W^H_{p,q} \to W^H_{p-r,q+r-2}, \qquad
A^r_{p,q} =\sum_{i=1}^{r-1}d^{i}_{p-r+i,q+r-i-1} \circ d^{r-i}_{p,q}
\]
so that \cref{IndDemandDiff} is equivalent to 
\begin{equation} \label{EquivIndDemandDiff}
d^0_{p-r,\,q+r-1}\circ d^r_{p,q}
+
 d^r_{p,q-1}\circ d^0_{p,q}
+
 A^r_{p,q}
=0.
\end{equation} 

We claim that for every $p,q \in \Z$ we have
\begin{equation*}
d^{0}_{p-r,q+r-2} \circ A^r_{p,q} = A^r_{p,q-1} \circ d^{0}_{p,q}.
\end{equation*}
By the definition of \(A^r_{p,q}\) and \(A^r_{p,q-1}\) the difference we want to show is $0$ equals
\begin{equation*}
d^0_{p-r,\,q+r-2}\circ A^r_{p,q}
- A^r_{p,q-1}\circ d^0_{p,q} \\
=\sum_{i=1}^{r-1}\Bigl(
 d^0_{p-r,\,q+r-2}
 \circ d^i_{p-r+i,\,q+r-i-1}
 \circ d^{r-i}_{p,q}
 -d^i_{p-r+i,\,q+r-i-2}
 \circ d^{r-i}_{p,q-1}
 \circ d^0_{p,q}
\Bigr).
\end{equation*}
For each \(1\leq i\leq r-1\), apply \cref{IndDemandDiff} inductively with indices \(p-r+i\) and \(q+r-i-1\) to get
\[
d^0_{p-r,\,q+r-2}
 \circ d^i_{p-r+i,\,q+r-i-1} 
= -d^i_{p-r+i,\,q+r-i-2}
 \circ d^0_{p-r+i,\,q+r-i-1} 
 -\sum_{a=1}^{i-1}
 d^a_{p-r+a,\,q+r-a-2}
 \circ d^{i-a}_{p-r+i,\,q+r-i-1}.
\]
Substituting this into the previous displayed equation expresses the difference we want to show is $0$ as
\[
-\sum_{i=1}^{r-1}
 d^i_{p-r+i,\,q+r-i-2}
 \circ
 \Bigl(
 d^0_{p-r+i,\,q+r-i-1}\circ d^{r-i}_{p,q}
 +d^{r-i}_{p,q-1}\circ d^0_{p,q}
 \Bigr) -\sum_{i=1}^{r-1}\sum_{a=1}^{i-1}
 d^a_{p-r+a,\,q+r-a-2}
 \circ d^{i-a}_{p-r+i,\,q+r-i-1}
 \circ d^{r-i}_{p,q}.
\]
Now apply \cref{IndDemandDiff} inductively for $r-i$ to get that
\[
d^0_{p-r+i,\,q+r-i-1}\circ d^{r-i}_{p,q}
+d^{r-i}_{p,q-1}\circ d^0_{p,q} 
= -\sum_{b=1}^{r-i-1}
 d^b_{p-r+i+b,\,q+r-i-b-1}
 \circ d^{r-i-b}_{p,q}.
\]
Substituting this into the previous displayed equation we get that the to be $0$ difference equals
\[
\sum_{i=1}^{r-1}\sum_{b=1}^{r-i-1}
 d^i_{p-r+i,\,q+r-i-2}
 \circ d^b_{p-r+i+b,\,q+r-i-b-1}
 \circ d^{r-i-b}_{p,q} 
 -\sum_{i=1}^{r-1}\sum_{a=1}^{i-1}
 d^a_{p-r+a,\,q+r-a-2}
 \circ d^{i-a}_{p-r+i,\,q+r-i-1}
 \circ d^{r-i}_{p,q}.
 \]
The first double sum is 
\[
\sum_{\substack{u,v,w \geq 1 \\ u+v+w = r}} d^u_{p-r+u,\,q+r-u-2}
 \circ d^v_{p-r+u+v,\,q+r-u-v-1}
 \circ d^w_{p,q}
\]
as we see by putting \(u=i\), \(v=b\), \(w=r-i-b\), and so is the second double sum -- put \(u=a\), \(v=i-a\), and
\(w=r-i\). 
Hence the two double sums cancel each other so our claim is proven.

If \(p<r\) or
\(q<0\), take \(d^r_{p,q}\) to be the zero map. Now fix \(p\geq r\), and induct on $q$.
In the base case $q=0$ we need to define $d^r_{p,0}$ in such a way that \cref{EquivIndDemandDiff} holds, which in view of $d^0_{p,0} = 0$ becomes
\[
d^0_{p-r,r-1} \circ d^r_{p,0} = -A^r_{p,0}.
\]
Because $W^H_{p,0}$ is a free left $\Z H$-module this can be done if (and only if) the image of $A^r_{p,0}$ is contained in that of $d^0_{p-r, r-1}$.
If $r = 2$, by exactness of \cref{eq:wall-vertical-augmentation} the latter image is $\ker \bar{\nu}^{(Y_{p-2})}$, and we have 
\[
\bar{\nu}^{(Y_{p-2})} \circ A^r_{p,0} = \bar{\nu}^{(Y_{p-2})} \circ d^1_{p-1,0} \circ d^1_{p,0} = 
\bar{\nu}^{(Y_{p-2})}  \circ f_{p-1,0}^H \circ f_{p,0}^H = d^L_{p-1} \circ \bar{\nu}^{(Y_{p-1})}  \circ f_{p,0}^H = d^L_{p-1} \circ d^L_p \circ \bar{\nu}^{(Y_{p})}  = 0
\]
by the commutativity of \cref{ChainMapResol}, as required.
Now if $r > 2$, the claim we have proven says that $d^0_{p-r,r-2} \circ A^r_{p,0} = 0$ namely the image of $A^r_{p,0}$ is contained in $\ker d^0_{p-r,r-2}$, which is $\im d^0_{p-r, r-1}$ by the exactness of \cref{eq:wall-vertical-augmentation}, as required.

Suppose now that $q \geq 1$.
In view of \cref{EquivIndDemandDiff}, in order to construct $d^r_{p,q}$ it is (necessary and) sufficient to check that the image of $A^{r}_{p,q} + d^r_{p,q-1} \circ d^0_{p,q}$ is contained in that of $d^0_{p-r, q+r-1}$.
By exactness of \cref{eq:wall-vertical-augmentation}, the latter image is $\ker d^0_{p-r,q+r-2}$ and we get from our claim and \cref{EquivIndDemandDiff} for $q-1$ that 
\[
\begin{split}
d^0_{p-r,q+r-2} \circ (A^{r}_{p,q} + d^r_{p,q-1} \circ d^0_{p,q}) &= A^r_{p,q-1} \circ d^{0}_{p,q} + d^0_{p-r,q+r-2} \circ d^r_{p,q-1} \circ d^0_{p,q} \\
&= A^r_{p,q-1} \circ d^{0}_{p,q} - (A^r_{p,q-1} + d^r_{p,q-2} \circ d^0_{p,q-1}) \circ d^0_{p,q} = 0
\end{split}
\]
because $d^0_{p,q-1} \circ d^0_{p,q} = 0$, as required.

The construction of the maps $d^{r}_{p,q}$ is thus complete, and it follows from \cref{IndDemandDiff} that for every positive integer $k$ we have $d_k^H \circ d_{k+1}^H = 0$.
Therefore, to conclude that \cref{PurportedResolution} is a complex it remains to check that $\varepsilon^H \circ d^H_1 = 0$.
Indeed we have
\[
\begin{split}
\varepsilon^H \circ d^H_1 &= \varepsilon^L \circ \bar{\nu}^{(Y_{0})}  \circ (d^0_1 + d^1_1) = \varepsilon^L \circ \bar{\nu}^{(Y_{0})}  \circ (d^0_{0,1} + d^1_{1,0}) = \varepsilon^L \circ \bar{\nu}^{(Y_{0})}  \circ d^0_{0,1} + \varepsilon^L \circ \bar{\nu}^{(Y_{0})}  \circ d^1_{1,0} = \varepsilon^L \circ \bar{\nu}^{(Y_{0})}  \circ d^1_{1,0} \\
&= \varepsilon^L \circ \bar{\nu}^{(Y_{0})}  \circ f_{1,0}^H =\varepsilon^L \circ d^L_1 \circ \bar{\nu}^{(Y_{1})}  = 0.
\end{split}
\]

Our last task is to show that \cref{PurportedResolution} is exact.
For every $k > 0$ we have a finite increasing filtration
\[
\bigoplus_{\substack{p+q = k \\ p \leq a}} W^H_{p,q}, \qquad 0 \leq a \leq k
\]
exhausting $W^H_k$ by free left $\Z H$-modules.
The differential $d^H_k$ respects the filtration, and the associated graded differential is induced by $d^0_k$.
In view of the exactness of \cref{eq:wall-vertical-augmentation} the first page of the associated spectral sequence is
\[
E_{p,q}^1 = 
\begin{cases}
\Z L^{(Y_p)} &q = 0 \\
0 &q>0
\end{cases}
\]
with differential on the row $q=0$ induced by $d^1_{p,0} = f_{p,0}$. It follows from the commutativity of the diagram in \cref{ChainMapResol} that this differential on the row $q = 0$ is $d^L_p$.

We conclude that the second page of the spectral sequence is given by
\[
E^2_{p,q} = 
\begin{cases}
\Z &(p,q )= (0,0) \\
0 &(p,q) \neq (0,0)
\end{cases}
\]
so the spectral sequence collapses on this page, and converges to the homology of the complex $(W^H_*, d^H_*)$.
We thus obtain the identification
\[
H_k(W^H_*, d^H_*) = 
\begin{cases}
\Z &k = 0 \\
0 &k > 0
\end{cases}
\]
which for $k=0$ is induced by $\varepsilon^H$ because it is the composition
$
W^H_{0,0}=\mathbb ZH
\xrightarrow{\bar{\nu} }
\mathbb ZL
\xrightarrow{\varepsilon^L}
\mathbb Z.
$
The required exactness of \cref{PurportedResolution} follows.
\end{proof}

\subsection{Wall resolutions of fibre products}
Fix a positive integer $n$, a finite set $X$, and exact sequences of groups
\begin{equation} \label{TwoExactSequencesFreeFact}
1 \xrightarrow{} N \xrightarrow{\eta} G \xrightarrow{\phi} Q \xrightarrow{} 1, \qquad 
1 \xrightarrow{} R \xrightarrow{\mu} F_X \xrightarrow{\pi} Q \xrightarrow{} 1.
\end{equation}
Then for the fibre product
$
P=G\times_Q F_X
$
we have the following commutative diagram of groups with exact rows, columns and diagonal
\[\begin{tikzcd}
	1 && 1 & 1 & \\
	& {N\times R} & {R} & {R} \\
	1 & {N} & {P} & {F_X} & 1 \\
	1 & {N} & {G} & Q & 1 \\
	&& 1 & 1 & 1
	\arrow[from=1-3, to=2-3]
	\arrow[from=1-1, to=2-2]
	\arrow[from=1-4, to=2-4]
	\arrow[from=2-2, to=3-3]
	\arrow[equal, from=2-3, to=2-4]
	\arrow["{(1,\mu)}", from=2-3, to=3-3]
	\arrow["\mu", from=2-4, to=3-4]
	\arrow[from=3-1, to=3-2]
	\arrow["{(\eta,1)}"', from=3-2, to=3-3]
	\arrow[equal, from=3-2, to=4-2]
	\arrow["\mathrm{pr}_2", from=3-3, to=3-4]
	\arrow["\mathrm{pr}_1", from=3-3, to=4-3]
	\arrow[from=3-3, to=4-4]
	\arrow[from=3-4, to=3-5]
	\arrow["\pi", from=3-4, to=4-4]
	\arrow[from=4-1, to=4-2]
	\arrow["\eta"', from=4-2, to=4-3]
	\arrow["\phi"', from=4-3, to=4-4]
	\arrow[from=4-3, to=5-3]
	\arrow[from=4-4, to=4-5]
	\arrow[from=4-4, to=5-4]
	\arrow[from=4-4, to=5-5]
\end{tikzcd}\]
Our assumptions are
\[
N\in\FP_n,
\qquad G\in\FP_{n+1},
\qquad Q\in\FP_{n+2},
\qquad Q\text{ finitely presented}.
\]

Our assumption that $N \in \FP_n$ in conjunction with \cref{cor:extend-partial-resolution} applied to the ring $\Z N$, $m=n$, $r=0$, and the augmentation $\Z N \to \Z=M$
provide us with a free resolution of the trivial left $\Z N$-module
\begin{equation} \label{Nresol}
\cdots\xrightarrow{d_3^N} {\Z N}^{(X_2)} \xrightarrow{d_2^N} {\Z N}^{(X_1)}
\xrightarrow{d_1^N} \Z N^{(X_0)} \xrightarrow{\varepsilon^N} \Z \xrightarrow{} 0, \qquad X_0 = \{*\},
\end{equation}
with the set $X_q$ being finite for $0 \leq q \leq n$.

Our assumption that $Q \in \FP_{n+2}$ in conjunction with \cref{lem:prescribed} applied to $m=n+2$ provide us with a finite $\Sigma \subseteq R$ generating it as a normal subgroup of $F_X$ and a commutative diagram 
\begin{equation} \label{ResolFQ}
\begin{tikzcd}
	&& 0 & {\mathbb ZF_X^{(X)}} & {\mathbb Z F_X} & {\mathbb Z} & 0 \\
	& \dots & {\mathbb ZQ^{(\Omega_2)}} & {\mathbb ZQ^{(\Omega_1)}} & {\mathbb ZQ^{(\Omega_0)}} & {\mathbb Z} & 0
	\arrow[from=1-3, to=1-4]
	\arrow["{d_1^F}", from=1-4, to=1-5]
	\arrow["{\bar{\pi}^{(X)}}", from=1-4, to=2-4]
	\arrow["{\varepsilon^F}", from=1-5, to=1-6]
	\arrow["{\bar{\pi}}", from=1-5, to=2-5]
	\arrow[from=1-6, to=1-7]
	\arrow["{\mathrm{id}_\mathbb Z}", from=1-6, to=2-6]
	\arrow["{d_3^Q}", from=2-2, to=2-3]
	\arrow["{d_2^Q}", from=2-3, to=2-4]
	\arrow["{d_1^Q}", from=2-4, to=2-5]
	\arrow["{\varepsilon^Q}", from=2-5, to=2-6]
	\arrow[from=2-6, to=2-7]
\end{tikzcd}
\end{equation}
with exact rows, $\Omega_0 = \{*\}$, $\Omega_1 = X$, $\Omega_2 = \Sigma$, the set $\Omega_t$ being finite for $0 \leq t \leq n+2$.

By \cref{Wall} we have a resolution of the trivial left $\Z P$-module $\Z$ by free left $\Z P$-modules 
\begin{equation} \label{ResolutionP}
\cdots \xrightarrow{d^P_3} W^P_2 \xrightarrow{d^P_2} W^P_1 \xrightarrow{d^P_1} W^P_{0} \xrightarrow{\varepsilon^P} \Z \xrightarrow{} 0, \qquad W_k^P = W^P_{0,k} \oplus W_{1,k-1}^P = \Z P^{(X_k)} \oplus \Z P^{(X \times X_{k-1})}
\end{equation}
arising from the exact sequence $1\to N\to P\to F_X\to 1$,
\cref{Nresol} and the upper row in \cref{ResolFQ}.
%
%

Next we construct using \cref{Wall} a resolution of the trivial left $\Z G$-module $\Z$ by free left $\Z G$-modules 
\begin{equation} \label{ResolutionG}
\cdots W^G_1 \xrightarrow{d^G_1} W^G_{0} \xrightarrow{\varepsilon^G} \Z \xrightarrow{} 0, \quad W_{p,q}^G = \Z G^{(\Omega_p \times X_q)}, \quad W_{0,q}^G \cong \Z G \otimes_{\Z P} W_{0,q}^P, \quad W_{1,q}^G \cong \Z G \otimes_{\Z P} W_{1,q}^P
\end{equation}
arising from the exact sequence $1\to N\to G\to Q\to 1$,
\cref{Nresol} 
and the lower row in \cref{ResolFQ}, the map $\bar{\mathrm{pr}}_1 \colon \Z P \to \Z G$ endowing $\Z G$ with the structure of a right $\Z P$-module, and for every integer $q$ put
\[
f^G_{1,q} = \Z G \otimes_{\Z P} f^P_{1,q}.
\]

In order to be able to apply \cref{Wall} we need to check that the diagram obtained by putting $p=1$, $H = G$, $L = Q$, $Y_0 = \{*\}$, and $Y_1 = X$ in \cref{ChainMapResol} is commutative. This follows by applying the functor $\Z G \otimes_{\Z P} - $ to \cref{ChainMapResol} with $p=1$, $H = P$, $L = F_X$, and using \cref{ResolFQ} in conjunction with the fact that the functor $\Z G \otimes_{\Z P} - $ transforms the homomorphism of $P$-modules $\bar{\mathrm{pr}}_2 \colon \Z P \to \Z F_X$ to the homomorphism of $G$-modules $\bar \phi \colon \Z G \to \Z Q$. 
To see this note that we have an isomorphism 
\[
\Z G \otimes_{\Z P} \Z F_X \xrightarrow{\sim} \Z Q, \qquad \alpha \otimes \beta \mapsto \bar \phi(\alpha) \bar \pi(\beta), \quad \alpha \in \Z G, \beta \in \Z F_X
\]
of left $G$-modules.
Therefore \cref{Wall} applies and provides us with \cref{ResolutionG}.

As a result of our choice of $f^G_{1,q}$,
we get a homomorphism $\Pi \colon W^P_* \to W^G_*$ of augmented resolutions of the trivial left $\Z P$-module $\Z$ given, for every $p \in \{0,1\}$ and every integer $q$, by the surjective homomorphism $\Pi_{p,q}$ of left $\Z P$-modules 
\[
W_{p,q}^P \cong \Z P \otimes_{\Z P}  W_{p,q}^P \xrightarrow{\bar{\mathrm{pr}}_1 \otimes \mathrm{id}} \Z G \otimes_{\Z P} W^P_{p,q} \cong W^{G}_{p,q}.
\]

\section{Truncating the Wall resolution}\label{truncation}

\begin{lemma} \label{lem:normalizer-chain-homotopy}

Fix $s\in P$, and for $a\in N$ put $\sigma(a)=s^{-1}as$. 
Choose an augmented chain map $c$ of abelian groups as in
\cref{lem:conjugation-lift} for the augmented resolution in \cref{Nresol}.
Then we have a $\Z P$-linear chain map
\[
T_q \colon W_{0,q}^P \cong \Z P \otimes_{\Z N} \Z N^{(X_q)} \xrightarrow{\cdot s \otimes c} \Z P \otimes_{\Z N} \Z N^{(X_q)} \cong W_{0,q}^P, \qquad q \geq 0,
\]
inducing on homology in degree $0$, which we identify with $\Z F_X$ using \cref{eq:wall-vertical-augmentation} with $H = P$, $p=0$, $L = F_X$, and $\nu = \mathrm{pr}_2$, right multiplication by $\mathrm{pr}_2(s)$.

Denoting by $\iota$ the $\Z P$-linear inclusion of chain complexes
\[
\iota_q \colon W_{0,q}^P  \to W_{q}^P, \qquad q \geq 0,
\]
we moreover get that $\iota \circ T$ is $\Z P$-linearly chain homotopic to $\iota$.

\end{lemma}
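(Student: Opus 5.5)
The plan is to verify three things in turn: that $T$ is well defined and is a chain map, what it induces on $H_0$, and finally the chain homotopy. The homotopy comes from the comparison theorem applied to a projective complex mapping into an acyclic augmented complex.

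\emph{Well-definedness and linearity.} Define $T_q$ on simple tensors by $x\otimes y\mapsto xs\otimes c(y)$ for $x\in\Z P$ and $y\in\Z N^{(X_q)}$. This is biadditive, so it suffices to check that it is $\Z N$-balanced. For $a\in N$, the pair $(xa,y)$ maps to $xas\otimes c(y)$. The pair $(x,ay)$ maps to $xs\otimes c(ay)=xs\otimes \sigma(a)c(y)$ by \cref{lem:conjugation-lift}, and this equals $xs\cdot s^{-1}as\otimes c(y)=xas\otimes c(y)$. So $T_q$ is well defined. It is left $\Z P$-linear because $\Z P$ acts on the left factor, while right multiplication by $s$ commutes with left multiplication. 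The differential on $W^P_{0,*}$ is $d^0=\mathrm{id}\otimes d^N$ and $c$ commutes with $d^N$, so $T$ is a chain map.

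\emph{Degree-$0$ homology.} Since $c$ is augmented, $\varepsilon^N\circ c_0=\varepsilon^N$. Identify $H_0(W^P_{0,*})$ with $\Z P\otimes_{\Z N}\Z\cong\Z F_X$ via $x\otimes 1\mapsto \bar{\mathrm{pr}}_2(x)$, as in \cref{eq:wall-vertical-augmentation}. Then the induced map sends $x\otimes 1$ to $xs\otimes 1$, which is right multiplication by $\mathrm{pr}_2(s)$.

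\emph{Chain homotopy.} First, $\iota$ is a chain map $W^P_{0,*}\to W^P_*$. Every $d^i_{0,q}$ with $i\geq1$ lands in $W^P_{-i,\ast}=0$, so $d^P$ restricted to $W^P_{0,q}$ equals $d^0_{0,q}$. Hence $\iota$ and $\iota\circ T$ are two $\Z P$-chain maps from a complex of free $\Z P$-modules into the augmented resolution \cref{ResolutionP}, which is exact. Their compositions with $\varepsilon^P$ agree in degree $0$: for $x\in W^P_{0,0}=\Z P$ we have $\varepsilon^P(xs)=\varepsilon^P(x)$, since the augmentation is trivial on group elements.

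I then run the standard comparison-theorem argument, which does not require the source complex to be exact. Put $D=\iota-\iota\circ T$. In degree $0$, $\varepsilon^P D_0=0$, so $D_0$ lands in $\ker\varepsilon^P=\im d^P_1$. Freeness of $W^P_{0,0}$ then gives $h_0$ with $d^P_1h_0=D_0$. Inductively, suppose $h_{q-1}$ has been constructed with $d^P_q h_{q-1}+h_{q-2}d^0_{0,q-1}=D_{q-1}$. The map $D_q-h_{q-1}d^0_{0,q}$ is then killed by $d^P_q$, so it lands in $\im d^P_{q+1}$ by exactness of \cref{ResolutionP}. Freeness of $W^P_{0,q}$ provides a lift $h_q$. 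This yields the $\Z P$-linear homotopy.

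The only step needing care is the balancedness of $T$, where the twisting identity of \cref{lem:conjugation-lift} is exactly what makes it work. I would also stress that the homotopy uses exactness of the target $W^P_*$ and not of the source: the source $W^P_{0,*}$ has $H_0\cong\Z F_X$ rather than $\Z$. This is precisely why we compose with $\iota$ rather than work inside $W^P_{0,*}$ itself.
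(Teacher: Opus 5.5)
Your proposal is correct and follows the paper's proof essentially step for step. You use the same balancedness computation via $c(ay)=\sigma(a)c(y)$, the same $H_0$ computation, and the comparison theorem for the homotopy, which you reprove by hand while rightly noting that only projectivity of the source and exactness of the target are needed. One small precision: under $W^P_{0,0}\cong\Z P$ one has $T_0(x)=x\,s\,c_0(1)$ rather than $xs$, so the degree-$0$ check $\varepsilon^P\circ T_0=\varepsilon^P$ also uses $\varepsilon^N(c_0(1))=1$, which you already have from $c$ being augmented.
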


\begin{proof}

To see that \(T_q\) is well-defined note that for \(u\in \mathbb ZP\), \(a\in N\), and \(x\in \Z N^{(X_q)} \) we have
\[
T_q(ua\otimes x)= uas\otimes c_q(x) = us\sigma(a) \otimes c_q(x) = us \otimes \sigma(a)c_q(x) = us \otimes c_q(ax) = T_q(u \otimes ax).
\]
Clearly, $T_q$ is additive, and it is moreover \(\mathbb ZP\)-linear because for each \(b\in \mathbb ZP\) we have
\[
T_q\bigl(b(u\otimes x)\bigr)
=
T_q(bu\otimes x)
=
bus\otimes c_q(x)
=
bT_q(u\otimes x).
\]
To see that \(T\) is a chain map note that for every positive integer $q$ we have
\[
d^0_{0,q}T_q(u\otimes x)
=
d^0_{0,q}(us\otimes c_q(x))  
=
us\otimes d_q^N c_q(x)  
=
us\otimes c_{q-1}d_q^N(x)  
=
T_{q-1}(u\otimes d_q^N(x))  
=
T_{q-1}d^0_{0,q}(u\otimes x).
\]

With the identifications $W_{0,0}^P \cong \Z P \otimes_{\Z N} \Z N^{(X_0)} \cong \Z P \otimes_{\Z N} \Z N $ we are making, the augmentation map to $\Z F_X \cong \Z F_X \otimes_{\Z} \Z$ is $\bar{\mathrm{pr}}_2 \otimes \varepsilon^N$,
so to see that $T$ induces right multiplication by $\mathrm{pr}_2(s)$ note that
\[
(\bar{\mathrm{pr}}_2 \otimes \varepsilon^N) \circ T_0(u\otimes x) =
\bar{\operatorname{pr}}_2(us) \otimes \varepsilon^N(c_0(x)) =
\bar{\operatorname{pr}}_2(u)\operatorname{pr}_2(s) \otimes \varepsilon^N(x) =
(\bar{\mathrm{pr}}_2(u) \otimes \varepsilon^N(x)) \cdot \operatorname{pr}_2(s),
\]
where $u \in \Z P$, $x \in \Z N$, and we have used the fact that \(c\) is an augmented chain map.

The map on degree $0$ homology induced by $\iota$ is the augmentation $\varepsilon^{F_X} \colon \Z F_X \to\Z$, which is invariant under right multiplication by $\mathrm{pr}_2(s)$ so $\iota$ and $\iota \circ T$ induce the same map on $H_0$.
The $\Z P$-linear chain homotopicity of $\iota$ and $\iota \circ T$ is now a consequence of the comparison theorem \cite[Theorem~6.16]{Rotman} applied to the chain maps $\iota$ and $\iota \circ T$ from the augmented chain complex in \cref{eq:wall-vertical-augmentation} to the augmented chain complex in \cref{ResolutionP} because these are projective resolutions.
\end{proof}

Fix a finite symmetric generating set $S$ of $P$.  
We put 
$
\Delta = (1,\mu)(\Sigma)
$
and note that $\Delta$ commutes with $(\eta,1)(N)$ in $P$.

For $H \in \{G,P\}$ and every $x \in X_{n+1}$ consider the projection onto the indexed by $x$ factor
\[
\mathrm{pr}_x \colon W^H_{n+1} \to \Z H
\]
obtained by projecting first to the summand $W^H_{0,n+1} \cong \Z H^{(X_{n+1})}$ and then to the $x$th coordinate.
For every $s\in S \cup \Delta$, choose $c^s$ and a $\Z P$-linear chain homotopy $K^s$ between $\iota$ and $\iota \circ T^s$ as in
\cref{lem:normalizer-chain-homotopy}, taking
$c^s = \id$ if $s \in \Delta$. We define the sets
\[
\mathcal A_s = \{x \in X_{n+1} : \textup{there exists } w \in W^P_{0,n} \textup{ such that } \mathrm{pr}_x (K_n^s(w)) \neq 0 \}, \qquad \mathcal A=\bigcup_{s\in S \cup \Delta}\mathcal A_s,
\]
and note that these are finite because $W^P_{0,n}$ is a finitely generated $\Z P$-module, and $S$, $\Delta$ are finite.

For $H \in \{P,G\}$, define an augmented subcomplex $A^H_* \subseteq W^H_*$ of finite-rank free $\Z H$-submodules by
\[
A^H_k =
\begin{cases}
W^H_k & 0 \leq k \leq n\\
\bigoplus_{\substack{p+q=n+1\\p\geq1}}W^H_{p,q}
  \oplus \Z H^{(\mathcal A)} &k = n+1 \\
0 &k > n+1.
\end{cases}
\]
It is exact (at $\Z$ and) in degrees up to $n-1$ (inclusive), we have $\Pi(A^P_*) \subseteq A^G_*$, and
$
K^s_n(W^P_{0,n}) \subseteq A^P_{n+1}
$
for every $s\in S \cup \Delta$.
Define the homomorphism of left $\Z N$-modules
\[
j \colon \ker d^N_n \longrightarrow H_n(A^P_*), 
\qquad j(z)=[1\tensor z].
\]

\begin{proposition} \label{prop:normal-centralizer}

The map $j$ is surjective, and $(1, \mu)(R)$ acts trivially on $H_n(A^P_*)$.

\end{proposition}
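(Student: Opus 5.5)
The plan has two parts: first prove surjectivity of $j$, using the structure $W^P_k=W^P_{0,k}\oplus W^P_{1,k-1}$ and the chain homotopies of \cref{lem:normalizer-chain-homotopy}; then obtain the triviality of the $(1,\mu)(R)$-action from the same homotopies applied to $s\in\Delta$.

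\emph{Step 1: reduce a cycle to the $W^P_{0,n}$ summand.} Since $A^P_n=W^P_n$ and $A^P_{n+1}\supseteq W^P_{1,n}$, a class in $H_n(A^P_*)$ is represented by a cycle $z=(z_0,z_1)$ with $z_0\in W^P_{0,n}$ and $z_1\in W^P_{1,n-1}$. We want to subtract $d^P_{n+1}(w)$ for a suitable $w\in W^P_{1,n}\subseteq A^P_{n+1}$ so as to kill $z_1$. Note that $d^P_{n+1}(w)$ has components $d^1w\in W^P_{0,n}$ and $d^0w\in W^P_{1,n-1}$, so it suffices to show $z_1\in\operatorname{im} d^0_{1,n}$.
\begin{itemize}
\item If $n\geq 2$: the $W^P_{1,n-2}$-component of $d^P_n z$ is $d^0 z_1$, which must vanish. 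Row $p=1$ of \cref{eq:wall-vertical-augmentation} is exact in positive degrees, so $z_1=d^0 w$.
\item If $n=1$: the cycle condition reads $d^0z_0+d^1z_1=0$ in $W^P_{0,0}$. Applying $\bar{\mathrm{pr}}_2$ and the commutativity of \cref{ChainMapResol} gives $d^F_1(\bar{\mathrm{pr}}_2^{(X)}z_1)=0$. Since $d^F_1$ is injective (the resolution of $\Z$ over the free group $F_X$ has length one), we get $\bar{\mathrm{pr}}_2^{(X)} z_1=0$. Exactness of \cref{eq:wall-vertical-augmentation} then gives $z_1=d^0w$.
\end{itemize}
So we may assume $z=z_0\in W^P_{0,n}$ with $d^0z_0=0$. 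Because $\Z P$ is free, hence flat, over $\Z N$, we have $\ker(\Z P\otimes_{\Z N}d^N_n)=\Z P\otimes_{\Z N}\ker d^N_n$. Hence $[z]$ lies in the $\Z P$-submodule of $H_n(A^P_*)$ generated by $\operatorname{im} j$.

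\emph{Step 2: $\operatorname{im} j$ is a $\Z P$-submodule.} It is an $N$-submodule by construction, so it suffices to show it is stable under every $s\in S$. Take $s\in S$ and apply \cref{lem:normalizer-chain-homotopy} to $s^{-1}\in S$ (here we use that $S$ is symmetric). For $z\in\ker d^N_n$ we get
\[
1\otimes z-s^{-1}\otimes c^{s^{-1}}(z)=d^P_{n+1}K^{s^{-1}}_n(1\otimes z)+K^{s^{-1}}_{n-1}d^0(1\otimes z).
\]
The second term vanishes, and by the choice of $\mathcal A$ the first term is a boundary in $A^P_*$. Hence $[1\otimes z]=[s^{-1}\otimes c^{s^{-1}}(z)]$. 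Multiplying by $s$ gives $s\,j(z)=j(c^{s^{-1}}(z))$, and $c^{s^{-1}}(z)\in\ker d^N_n$ because $c^{s^{-1}}$ is a chain map. Therefore $\operatorname{im} j$ is stable under $S$, so it is a $\Z P$-submodule, and by Step 1 $j$ is surjective. I expect this step to be the main obstacle: the bookkeeping needed to show that only finitely many extra generators $\mathcal A$ are required is exactly what the definition of $\mathcal A$ was designed to handle.

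\emph{Step 3: triviality of the $R$-action.} For $\delta\in\Delta$ we chose $c^\delta=\id$, so $T^\delta(1\otimes z)=\delta\otimes z=\delta\cdot(1\otimes z)$. The same homotopy argument as in Step 2 gives $\delta\,j(z)=j(z)$. Since $j$ is surjective, every $\delta\in\Delta$ acts trivially on $H_n(A^P_*)$.

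The elements of $P$ acting trivially on the module $H_n(A^P_*)$ form a normal subgroup of $P$. For $(g,f)\in P$ and $r\in R$,
\[
(g,f)(1,\mu(r))(g,f)^{-1}=(1,f\mu(r)f^{-1}).
\]
Since $\mathrm{pr}_2$ is onto $F_X$, and $\Sigma$ normally generates $R$ in $F_X$, the normal closure of $\Delta$ in $P$ is all of $(1,\mu)(R)$. Hence $(1,\mu)(R)$ acts trivially on $H_n(A^P_*)$.
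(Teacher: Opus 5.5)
Your proof is correct and follows the paper's argument. The homotopies $K^s$ landing in $A^P_{n+1}$ give $s\cdot j(c^s_n(z))=j(z)$, symmetry of $S$ makes $\im j$ a $P$-submodule, $c^\delta=\id$ makes $\Delta$, and hence its normal closure $(1,\mu)(R)$, act trivially, and cycles are reduced to $W^P_{0,n}$ modulo $d^P_{n+1}(W^P_{1,n})$ with $W^P_{1,n}\subseteq A^P_{n+1}$. The only cosmetic difference is in that reduction: the paper lifts the cycle through the exact complex $W^P_*$ and discards the $W^P_{1,n}$-part of the preimage, whereas you check directly, via exactness of the $p=1$ row (and injectivity of $d^F_1$ when $n=1$), that the $W^P_{1,n-1}$-component is a $d^0$-boundary; both work.
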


\begin{proof}

For \(s\in S\cup\Delta\) we have
$
\iota_nT^s_n - \iota_n = d^P_{n+1} \circ K^s_n + K^s_{n-1} \circ d^0_{0,n}
$
so for \(z\in \ker d_n^N\) we get that
\[
\begin{split}
\iota_nT^s_n(1\otimes z)-\iota_n(1\otimes z)
=
d^P_{n+1} (K^s_n(1\otimes z)) + K^s_{n-1} (d^0_{0,n}(1 \otimes z)) &= 
d^P_{n+1} (K^s_n(1\otimes z)) + K^s_{n-1} (1 \otimes d_n^N(z)) \\
&= d^P_{n+1} (K^s_n(1\otimes z)) \in d^P_{n+1}(A^P_{n+1})
\end{split}
\]
hence 
$
[T^s_n(1\otimes z)]
=
[1\otimes z]
$
as classes in $H_n(A^P_*)$. 
On the other hand
$
T^s_n(1\otimes z)
=
s\otimes c^s_n(z)
$
so
\begin{equation} \label{sActionOnj}
s\cdot j(c^s_n(z))
=
[s\otimes c^s_n(z)]
=
[1\otimes z]
=
j(z).
\end{equation}
It follows that 
$
\im j \subseteq s \cdot \im j.
$
Since $S$ is symmetric it follows that 
$
\im j = s \cdot \im j.
$
Because $S$ generates $P$ we infer that $\im j$ is a $P$-submodule of $H_n(A^P_*)$. 
Plugging \(s=\delta\) in \cref{sActionOnj} we see that $\delta$ acts as the identity on $\im j$, so the normal subgroup of $P$ generated by $\Delta$, which is $(1, \mu)(R)$ because $R$ is the normal subgroup of $F_X$ generated by $\Sigma$, acts trivially on $\im j$. 



In order to show that $\im j = H_n(A^P_*)$, take
$
[\alpha]\in H_n(A^P_*)
$
represented by
\[
\alpha \in \ker d_n^P \subseteq W^P_n = W^P_{0,n} \oplus W^P_{1,n-1}.
\]
Because \cref{ResolutionP} is exact, there exists $\beta \in W^P_{n+1}$ with $d_{n+1}^P(\beta)=\alpha$.
Writing $\beta=\beta_0+b$ with $\beta_0 \in W^P_{0,n+1}$ and
$
b \in W^P_{1,n} \subseteq A^P_{n+1}
$
, we have in $H_n(A^P_*)$ 
\[
[\alpha] = [\alpha - d^P_{n+1}(b)] = [d_{n+1}^P(\beta_0)].
\]
Now, the element \(d_{n+1}^P(\beta_0) \in W^P_{0,n}\) lies in $\ker d^P_n$, so
\[
d_{n+1}^P(\beta_0) \in \ker (d^P_n|_{W_{0,n}^P}) = \ker (\mathrm{id}_{\mathbb ZP}\otimes d_n^N) = \mathbb ZP\otimes_{\mathbb ZN}\ker d_n^N
\]
as \(\mathbb ZP\) is free as a right \(\mathbb ZN\)-module.

We can thus express \(d_{n+1}^P(\beta_0)\) as a finite sum of elements of the form
$
h\otimes z
$
with $h \in\mathbb ZP$ and $z \in \ker d_n^N$.
In $H_n(A^P_*)$ we have
$
[h\otimes z]
=
h\cdot [1\otimes z]
=
h\cdot j(z)
$
, so \([\alpha]\) lies in the left \(\mathbb ZP\)-submodule of $H_n(A^P_*)$ generated by $\im j$, which is just $\im j$ as we have seen that it is a $P$-submodule of $H_n(A^P_*)$.
\end{proof}

\subsection{Finite generation of the kernel}

\begin{proposition}\label{KerFinGen}
The homomorphism of left $\Z P$-modules
$
H_n(A^P_*) \xrightarrow{} H_n(A^G_*)
$
induced by $\Pi$ is surjective and its kernel is finitely generated as a left \(\mathbb ZP\)-module.
\end{proposition}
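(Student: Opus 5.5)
The plan is to prove surjectivity and finite generation of the kernel separately. For finite generation, the idea is to factor the map $H_n(A^P_*)\to H_n(A^G_*)$ through the intermediate complex $B_*=\Z G\otimes_{\Z P}A^P_*$ and use \cref{prop:normal-centralizer} for one step and $Q\in\FP_{n+2}$ for the other.

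\emph{Surjectivity.} I would rerun the second half of the proof of \cref{prop:normal-centralizer} for $G$. Take a cycle $\alpha\in\ker d^G_n\subseteq A^G_n=W^G_n$. By exactness of \cref{ResolutionG}, $\alpha=d^G_{n+1}(\beta)$ for some $\beta\in W^G_{n+1}$. Write $\beta=\beta_0+b$ with $\beta_0\in W^G_{0,n+1}$ and $b\in\bigoplus_{p\ge1}W^G_{p,n+1-p}\subseteq A^G_{n+1}$. Since $d^G$ restricted to $W^G_{0,*}$ is just $d^0$, we have $[\alpha]=[d^0(\beta_0)]$ with $d^0(\beta_0)\in\Z G\otimes_{\Z N}\ker d^N_n$. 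Hence $H_n(A^G_*)$ is generated as a $\Z G$-module by the classes $[1\otimes z]=\Pi_*(j(z))$. Because $\mathrm{pr}_1\colon P\to G$ is surjective (as $\pi$ is), the image of $\Pi_*$ is a $\Z G$-submodule, and so $\Pi_*$ is onto.

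\emph{Kernel: the decomposition.} Note that $B_*$ is the subcomplex of $A^G_*$ consisting of the summands with $p\le1$, and $\Pi$ factors as $A^P_*\to B_*\hookrightarrow A^G_*$. Let $C_*=A^G_*/B_*$. By finiteness of $\Omega_p$ for $p\le n+2$ and of $X_q$ for $q\le n$, $C_*$ consists of the finite-rank free modules $\bigoplus_{p\ge2}W^G_{p,k-p}$ in degrees $k\le n+1$. It then suffices to show two things:
\begin{enumerate}
\item the kernel of $H_n(A^P_*)\to H_n(B_*)$ is finitely generated;
\item the kernel of $H_n(B_*)\to H_n(A^G_*)$ is finitely generated as a $\Z G$-module, hence as a $\Z P$-module via the surjection $\Z P\to\Z G$.
\end{enumerate}
An extension of finitely generated modules is finitely generated, which then handles the composite.

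\emph{Step (ii).} By the long exact sequence, this kernel is a quotient of $H_{n+1}(C_*)$ after truncation. More precisely, it is the image of the cycles $Z_{n+1}(C_*)$ under the connecting map. I would filter $W^G_*/(p\le1\text{ part})$ as in the proof of \cref{Wall}. Its $E^1$ page is $\Z Q^{(\Omega_p)}$ in the row $q=0$, for $p\ge2$, with differential $d^Q_p$, and exactness of the $Q$-resolution at $\Omega_p$ for $2\le p\le n+1$ shows that every cycle of $C_{n+1}$ is a boundary from the degree-$(n+2)$ part $\bigoplus_{p\ge2}W^G_{p,n+2-p}$. Every summand of this part has finite rank: $q\le n$ and $p\le n+2$. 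Hence $Z_{n+1}(C_*)$, and so the kernel in (ii), is finitely generated. This is exactly where $Q\in\FP_{n+2}$ enters.

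\emph{Step (i), the expected main obstacle.} By \cref{prop:normal-centralizer}, $(1,\mu)(R)$ acts trivially on $H_n(A^P_*)$, so $H_n(A^P_*)=\Z G\otimes_{\Z P}H_n(A^P_*)$. Right exactness of $\Z G\otimes_{\Z P}-$ applied to $A^P_{n+1}\to Z_n(A^P_*)\to H_n(A^P_*)\to0$ then identifies the kernel in (i) with the image of the failure of $\Z G\otimes_{\Z P}Z_n(A^P_*)\to B_n$ to be injective. That failure is controlled by $\mathrm{Tor}_1^{\Z P}(\Z G,\operatorname{im}d^P_n)$, which need not be finitely generated.

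I would first try to bypass this Tor term using the explicit description from \cref{prop:normal-centralizer}: $H_n(A^P_*)$ equals $\im j$, with $R$ acting trivially. An element of the kernel is then a class $\sum h_i\, j(z_i)$ whose image bounds in $B_*$. Using the chain homotopies $K^s$ with $s\in\Delta$ (whose contributions lie in $A^P_{n+1}$ by the choice of $\mathcal A$), I would try to lift such a bounding chain back to $A^P_{n+1}$, up to finitely many corrections indexed by $\Sigma\times\mathcal A$. If this lift works, the kernel in (i) is generated by finitely many explicit classes.
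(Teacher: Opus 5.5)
Your surjectivity argument is correct. Step (ii) also correctly shows that $\ker g$ is finitely generated, where $h\colon H_n(A^P_*)\to H_n(B_*)$ and $g\colon H_n(B_*)\to H_n(A^G_*)$. (For $n=1$ your claim that cycles of $C_2$ bound is false, since $C_*$ has no column $p=1$ and your quotient complex has $H_2\cong\Z Q^{(\Sigma)}/\mathrm{im}\,d^Q_3$; but $Z_2(C_*)=C_2=\Z G^{(\Sigma)}$ is finitely generated anyway.)

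The genuine gap is step (i), which you leave at ``if this lift works''. This is where the substance of the proposition lies. Your diagnosis via $\mathrm{Tor}_1^{\Z P}(\Z G,\mathrm{im}\,d^P_n)$ is a red herring: in fact $\ker h=0$. No Tor appears if you apply right exactness to column-$0$ cycles rather than to $Z_n(A^P_*)$. Concretely:
\begin{enumerate}
\item Every class in $H_n(A^P_*)$ is represented by some $w\in M=\Z P\otimes_{\Z N}\ker d^N_n\subseteq W^P_{0,n}$.
\item $\Z G\otimes_{\Z P}M=\Z G\otimes_{\Z N}\ker d^N_n$ embeds in $W^G_{0,n}$, since $\Z G$ is free over $\Z N$.
\item Suppose $\Pi(w)=d^G(\beta_1+\beta_0)$ with $\beta_1\in W^G_{1,n}$ and $\beta_0\in\Z G^{(\mathcal A)}$. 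Then $d^0(\beta_1)=0$, so $\beta_1\in\Z G^{(X)}\otimes_{\Z N}\ker d^N_n$, and it lifts to a $d^0$-cycle $\tilde\beta_1\in W^P_{1,n}$.
\item Choose a lift $\tilde\beta_0\in\Z P^{(\mathcal A)}$ of $\beta_0$. The element $w-d^P(\tilde\beta_1+\tilde\beta_0)$ lies in $M$ and dies in $\Z G\otimes_{\Z P}M$.
\item Hence it is a sum of terms $(g-1)m$ with $g\in\ker\mathrm{pr}_1=(1,\mu)(R)$ and $m\in M$. The classes of these terms vanish by \cref{prop:normal-centralizer}.
\end{enumerate}
This is the paper's identification $E^2_{0,n}(G)\cong\Z G\otimes_{\Z P}E^2_{0,n}(P)\cong H_n(A^P_*)$, obtained by applying right exactness to $E^1_{1,n}\to E^1_{0,n}\to E^2_{0,n}\to0$. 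No further use of the homotopies $K^s$ is needed.

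There is a second problem. Your reduction ``an extension of finitely generated modules is finitely generated'' does not handle the composite. The module $\ker(g\circ h)$ is an extension of $\ker g\cap\mathrm{im}\,h$ by $\ker h$, not of $\ker g$ by $\ker h$. A submodule of a finitely generated $\Z P$-module need not be finitely generated.
\begin{itemize}
\item For $n\ge2$ this is harmless, since $h$ is onto. The $W^G_{1,n-1}$-component of a cycle in $B_n$ is a $d^0$-cycle, hence a $d^0$-boundary from $W^G_{1,n}\subseteq B_{n+1}$.
\item For $n=1$ it is not harmless. Here $H_1(B_*)/\mathrm{im}\,h\cong\ker d^Q_1$. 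The intersection $\ker g\cap\mathrm{im}\,h$ is the image under the connecting map of $\ker\bigl(d^Q_2\circ\bar\phi^{(\Sigma)}\bigr)\subseteq\Z G^{(\Sigma)}$. The paper shows this module is finitely generated using $\ker d^Q_2=\mathrm{im}\,d^Q_3$ with $\Omega_3$ finite. This is exactly where $Q\in\FP_3$ must enter when $n=1$.
\end{itemize}

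The paper avoids both issues by using the single spectral sequence of $A^G_*$. There $H_n(A^G_*)\cong E^\infty_{0,n}(G)$, and the kernel of $E^2_{0,n}(G)\to E^\infty_{0,n}(G)$ is filtered by the images of $\partial^r_{r,n-r+1}(G)$ for $2\le r\le n+1$. The case $r=n+1$ is precisely the finite generation of $E^2_{n+1,0}(G)=\ker\bigl(d^Q_{n+1}\circ\bar\phi^{(\Omega_{n+1})}\bigr)$.
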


\begin{proof}

For \(H\in\{P,G\}\) inherited from $W^H_*$ is a finite increasing filtration by $\Z P$-subcomplexes
\[
F_aA_k^H
=A^H_k\cap
\bigoplus_{\substack{p+q=k\\ p\leq a}} W^H_{p,q}, \qquad 0 \leq a \leq k, \qquad k \geq 0,
\]
%
with associated spectral sequence
$
\partial^r_{p,q}(H) \colon E^r_{p,q}(H)\xrightarrow{} E^r_{p-r,q+r-1}(H)
$
of $\Z P$-modules.
The map
$
\Pi
$
preserves our filtrations so it induces a morphism of spectral sequences of $\Z P$-modules
$
E^r(P) \xrightarrow{} E^r(G).
$

Under the identifications
\[
E^1_{1,0}(P)\cong \Z F_X^{(X)}, \quad
E^1_{0,0}(P)\cong \Z F_X,
\]
we have
$
\partial^1_{1,0}(P) = d^F_1
$
which is injective. Therefore, if $n=1$, then $E^2_{1,n-1}(P) = 0$ by the exactness of \cref{eq:wall-vertical-augmentation} for $H = P$, $p=1$. If $n\ge 2$, then $E^2_{1,n-1}(P)=0$ since $E^1_{1,n-1}(P)=0$.
We conclude that the only (possibly) nonzero term in total degree $n$ is $E^2_{0,n}(P)$ with the differentials entering and leaving it being zero.

As a result
\[
H_n(A^P_*)\cong E^2_{0,n}(P).
\]

For \(H\in\{P,G\}\), since \(\Z H\) is free as a right \(\Z N\)-module, we have
\[
E^1_{0,n}(H)
\cong
\frac{
\Z H\otimes_{\Z N}\ker(d^N_n)
}{
d^0_{0,n+1}\bigl(\Z H^{(\mathcal A)}\bigr)
}, \qquad
E^1_{1,n}(H)
\cong
\Z H^{(X)}
\otimes_{\Z N}
\ker(d^N_n),
\]
so our choice
$
f^G_{1,n}=\Z G\otimes_{\Z P}f^P_{1,n}
$
implies that
\[
E^1_{0,n}(G)\cong \Z G\otimes_{\Z P}E^1_{0,n}(P),
\qquad
E^1_{1,n}(G)\cong \Z G\otimes_{\Z P}E^1_{1,n}(P),
\]
and under these identifications
$
\partial^1_{1,n}(G)= \Z G\otimes_{\Z P} \partial^1_{1,n}(P).
$
Since tensor product is right exact we have
\[
E^2_{0,n}(G)
\cong
\operatorname{coker} \partial^1_{1,n}(G) \cong
\Z G\otimes_{\Z P}E^2_{0,n}(P) \cong  \Z G\otimes_{\Z P}H_n(A^P_*) \cong H_n(A_*^P)
\]
where the last isomorphism is a consequence of \cref{prop:normal-centralizer} because $(1, \mu)(R) = \ker \mathrm{pr}_1$.

Note that \(E^2_{p,n-p}(G) = 0\) for \(p>0\): if \(p<n\) this follows from exactness of \cref{eq:wall-vertical-augmentation} with $H = G$, while if \(p=n\) it is a consequence of the exactness of \cref{ResolFQ} at \(\Z Q^{(\Omega_n)}\). Therefore
\[
H_n(A^G_*)\cong E^\infty_{0,n}(G).
\]
By naturality of the spectral sequence, the map
$
H_n(A^P_*)\to H_n(A^G_*)
$
induced by $\Pi$ is (identified with) the edge map
$
E^2_{0,n}(G)\xrightarrow{} E^\infty_{0,n}(G)
$
that exists, and is surjective, because the outgoing differentials from the \(p=0\) column of the spectral sequence $E^r_{p,q}(G)$ vanish.
It remains to show that the kernel of this edge map is finitely
generated as a \(\Z P\)-module.

The maps
\[
E^2_{0,n}(G)\to E^3_{0,n}(G)\to \cdots \to E^\infty_{0,n}(G)
\]
are successive quotients by the images of the incoming differentials
\[
\partial^r_{r, n-r+1}(G) \colon E^r_{r,n-r+1}(G)\longrightarrow E^r_{0,n}(G),
\qquad
2\leq r\leq n+1.
\]
In other words, the kernel of our edge map
$
E^2_{0,n}(G)\to E^\infty_{0,n}(G)
$
admits a finite filtration whose successive quotients are isomorphic to $\im \partial^r_{r, n-r+1}(G)$ for $2 \leq r \leq n+1$.
It is therefore enough to show that each of these images is finitely
generated as a \(\Z G\)-module.

First let \(2\leq r\leq n\). We claim that
\begin{equation} \label{StabilizationClaim}
E^r_{r,n-r+1}(G)=E^1_{r,n-r+1}(G).
\end{equation}
The incoming differentials to $E^*_{r, n-r+1}(G)$ vanish since their source has total degree \(n+2\), while
\(A^G_{n+2}=0\). 
For \(1\leq t<r\), we thus have
\[
E^{t+1}_{r, n-r+1}(G) = \ker \partial^t_{r, n-r+1} (G) = E^{t}_{r, n-r+1}(G) 
\]
namely $\partial^t_{r, n-r+1} (G) = 0$ because its target is
$
E^t_{r-t,n-r+t}(G),
$
which vanishes for $t \geq 1$ by the exactness of \cref{eq:wall-vertical-augmentation}.
Our claim now follows by induction on $t$.

Since \(A^G_{n+2}=0\) and \(\Z G\) is free as a right \(\Z N\)-module, we get from the exactness of \cref{Nresol} that
\[
E^1_{r,n-r+1}(G)
=
\ker\bigl(
d^0_{r,n-r+1}\colon W^G_{r,n-r+1}\to W^G_{r,n-r}
\bigr) = \Z G^{(\Omega_r)}
\otimes_{\Z N}
\ker
d^N_{n-r+1} = \Z G^{(\Omega_r)}
\otimes_{\Z N} \im d^N_{n-r+2}.
\]
The module \(\Z N^{(X_{n-r+2})}\) is finite-rank free, because
\(n-r+2\leq n\), so $\im d^N_{n-r+2}$ is finitely generated as a
\(\Z N\)-module. 
Moreover, \(\Omega_r\) is finite so in view of our claim, the \(\Z G\)-module
$
E^r_{r,n-r+1}(G)
$
is finitely generated, hence the image of $\partial^r_{r, n-r+1}(G)$ is finitely generated as a $\Z G$-module.

It remains to handle $r = n+1$, where it suffices to show that $E^{n+1}_{n+1,0}(G)$ is finitely generated as a $\Z G$-module.
Since
\(A^G_{n+2}=0\), we have
\[
E^1_{n+1,0}(G)=W^G_{n+1,0}\cong \Z G^{(\Omega_{n+1})}.
\]
Using \cref{eq:wall-vertical-augmentation} for $H = G$, $L = Q$, and $p=n$ we also make the identification
\[
E^1_{n,0}(G) = \coker \bigl(d^0_{n,1} \colon W^G_{n,1} \to W^G_{n,0}\bigr) = \Z Q^{(\Omega_n)}.
\]
With these identifications $\partial^1_{n+1,0}(G) \colon E^1_{n+1,0}(G) \to E^1_{n,0}(G)$ becomes the composition
\[
 \Z G^{(\Omega_{n+1})}
\xrightarrow{\bar\phi^{(\Omega_{n+1})}}
\Z Q^{(\Omega_{n+1})}
\xrightarrow{d^Q_{n+1}}
\Z Q^{(\Omega_n)},
\]
whose kernel is 
$
E^2_{n+1,0}(G).
$
We claim that
$
E^2_{n+1,0}(G)
$
is finitely generated as a $\Z G$-module.


Since $\bar\phi^{(\Omega_{n+1})}$ is onto, we have a short exact sequence 
\[
0\to \ker \bar{\phi}^{(\Omega_{n+1})} \to E^2_{n+1,0}(G) \to \ker d^Q_{n+1} \to 0
\]
Since \cref{ResolFQ} is exact, we have $\ker d^Q_{n+1} = \im d^Q_{n+2}$. Now, 
$\im d^Q_{n+2}$ is finitely generated as a \(\Z G\)-module because
\(\Omega_{n+2}\) is finite. Moreover, $\ker \bar{\phi}^{(\Omega_{n+1})}$ is finitely generated by the exactness of \cref{eq:wall-vertical-augmentation} for $H = G$, $L = Q$, $p=n+1$, the image of the homomorphism of $G$-modules
\[
d^0_{n+1,1}\colon W^G_{n+1,1}\to W^G_{n+1,0}
\]
so it is also finitely generated as a $\Z G$-module, in view of the finite generation of 
$
W^G_{n+1,1} \cong \Z G^{(\Omega_{n+1}\times X_1)}
$
as a $\Z G$-module, stemming from the finiteness of \(\Omega_{n+1}\) and \(X_1\).
The claimed finite generation of $E^2_{n+1,0}(G)$ as a $\Z G$-module follows.

An argument similar to the proof of \cref{StabilizationClaim} shows that
$
E^2_{n+1,0}(G) = E^{n+1}_{n+1,0}(G)
$
so \(E^{n+1}_{n+1,0}(G)\) is finitely generated as a \(\Z G\)-module. 
\end{proof}

\begin{lemma} \label{lem:MGfinite}

The left $\Z G$-module $H_n(A^G_*)$ is finitely generated.

\end{lemma}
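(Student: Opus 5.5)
The plan is to deduce the claim directly from \cref{lem:partial-resolution}, applied to the ring $\Z G$, the trivial module $M=\Z$, and $m=n+1$. We use the assumption $G\in\FP_{n+1}$. Since $H_n(A^G_*)=\ker(d^G_n|_{A^G_n})/d^G_{n+1}(A^G_{n+1})$ is a quotient of $\ker(d^G_n|_{A^G_n})$, it is enough to show that this kernel is a finitely generated left $\Z G$-module.

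First, the truncated complex in low degrees is a partial resolution by finite-rank free modules. By definition $A^G_k=W^G_k$ for $0\leq k\leq n$, with the differentials $d^G_k$ of \cref{ResolutionG}, and
\[
W^G_k=\bigoplus_{p+q=k}W^G_{p,q}=\bigoplus_{p+q=k}\Z G^{(\Omega_p\times X_q)}.
\]
For $p+q=k\leq n$ we have $p\leq n\leq n+2$ and $q\leq n$. Hence $\Omega_p$ is finite by the choice of \cref{ResolFQ}, and $X_q$ is finite by the choice of \cref{Nresol}. So each $W^G_k$ with $k\leq n$ is a finite-rank free $\Z G$-module. Since \cref{ResolutionG} is an augmented resolution of $\Z$ (by \cref{Wall}), the sequence
\[
W^G_n\xrightarrow{d^G_n}W^G_{n-1}\to\cdots\to W^G_0\xrightarrow{\varepsilon^G}\Z\to0
\]
is exact. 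It is therefore a partial augmented resolution of $\Z$ of the form \cref{FirstProjResM} with $m-1=n$.

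Next, by \cref{lem:partial-resolution} and $G\in\FP_{n+1}$, the module $\ker d^G_n\subseteq W^G_n$ is finitely generated. The differential of $A^G_*$ in degree $n$ is the restriction of $d^G_n$ to $A^G_n=W^G_n$, so $\ker(d^G_n|_{A^G_n})=\ker d^G_n$ is finitely generated. Its quotient $H_n(A^G_*)$ is then finitely generated as well.

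There is no real obstacle here. The only point needing care is that every index set $\Omega_p$ and $X_q$ appearing in total degree at most $n$ is finite. This holds because the finiteness ranges $t\leq n+2$ for $\Omega_t$ and $q\leq n$ for $X_q$ cover all pairs with $p+q\leq n$.
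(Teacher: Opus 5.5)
Your proof is correct and matches the paper's: you treat $A^G_n\to\cdots\to A^G_0\to\Z\to0$ as a partial resolution by finite-rank free $\Z G$-modules, apply \cref{lem:partial-resolution} with $m=n+1$ using $G\in\FP_{n+1}$, and pass to the quotient $H_n(A^G_*)$. Your explicit check that $\Omega_p$ and $X_q$ are finite whenever $p+q\leq n$ is a detail the paper leaves implicit.
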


\begin{proof}

We have an augmented partial resolution of the trivial $\Z G$-module by finite-rank free $\Z G$-modules
\[
A^G_n \longrightarrow A^G_{n-1} \longrightarrow \cdots
\longrightarrow A^G_0 \longrightarrow \Z \longrightarrow0.
\]
Since $G\in\FP_{n+1}$,
\cref{lem:partial-resolution} with $m=n+1$ tells us that
$
\ker(A^G_n \to A^G_{n-1})
$
is finitely generated as a $\Z G$-module, so its quotient
$H_n(A^G_*)$ is a finitely generated $\Z G$-module as well.
\end{proof}

\section{Free second factor}\label{free-second-factor}

\begin{proposition} \label{prop:free-factor}

Let $n$ be a nonnegative integer, $F$ a free group of finite rank, and let 
\[
1\to N\to G\to Q\to1,\qquad1\to R\to F\to Q\to1,
\]
be short exact sequences of groups, where $N$ is $\FP_{n}$,
$G$ is $\FP_{n+1}$ and $Q$ is $\FP_{n+2}$ and finitely presented. Then
the fibre product $G\times_{Q}F$ is $\FP_{n+1}$.
\end{proposition}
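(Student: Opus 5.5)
We reduce to the setup of \Cref{wall-resolutions,truncation}: write $F=F_X$ with $X$ a finite free basis and $\pi\colon F_X\to Q$, so $P=G\times_Q F_X$. The cases $n=0$ (and possibly $n=1$, which is known from \cite{KocLim}) are handled separately: for $n=0$, $P$ is finitely generated by the $0$-$1$-$2$ Lemma, since $F_X$ is finitely generated and $Q$ is finitely presented. So we may assume $n\geq 1$, which matches the standing hypothesis of \Cref{wall-resolutions}, and use the complexes $A^P_*$, $A^G_*$ built there.

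The plan is to apply \cref{lem:partial-resolution} to the ring $\Z P$ with $m=n+1$. The truncated complex $A^P_n\to\cdots\to A^P_0\to\Z\to0$ consists of finite-rank free $\Z P$-modules: $W^P_k=\Z P^{(X_k)}\oplus\Z P^{(X\times X_{k-1})}$ with $X_k$ finite for $k\leq n$ and $X$ finite. It is exact in degrees below $n$ because $A^P_k=W^P_k$ for $k\leq n$ and \cref{ResolutionP} is exact. So $P\in\FP_{n+1}$ holds if and only if $\ker(d^P_n\colon A^P_n\to A^P_{n-1})$ is finitely generated. We have a short exact sequence
\[
0\to d^P_{n+1}(A^P_{n+1})\to \ker d^P_n\to H_n(A^P_*)\to 0,
\]
and the left term is finitely generated because $A^P_{n+1}$ has finite rank ($\mathcal A$, $X$ and $X_{n}$ being finite). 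It therefore suffices to show that $H_n(A^P_*)$ is a finitely generated $\Z P$-module.

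For this we use the surjection $H_n(A^P_*)\to H_n(A^G_*)$ from \cref{KerFinGen}, whose kernel is finitely generated over $\Z P$. The target $H_n(A^G_*)$ is finitely generated over $\Z G$ by \cref{lem:MGfinite}. Since $\mathrm{pr}_1\colon P\to G$ is surjective, and the $\Z P$-action on $H_n(A^G_*)$ factors through $\bar{\mathrm{pr}}_1$, it is also finitely generated over $\Z P$. An extension of finitely generated modules is finitely generated, so $H_n(A^P_*)$ is finitely generated and the result follows.

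The main obstacle is essentially already absorbed into \cref{KerFinGen} and \cref{prop:normal-centralizer}. What remains is bookkeeping: checking that $\Pi$ is $\Z P$-linear with the action on $W^G$ via $\mathrm{pr}_1$, and handling the low-degree edge case $n=0$ separately, since the construction fixed $n$ positive.
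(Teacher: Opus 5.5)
Your proposal is correct and follows the paper's proof essentially verbatim. You get finite generation of $H_n(A^P_*)$ from \cref{KerFinGen} and \cref{lem:MGfinite}, using surjectivity of $\mathrm{pr}_1$ to pass from $\Z G$ to $\Z P$. You then use the short exact sequence $0\to d^P_{n+1}(A^P_{n+1})\to\ker d^P_n\to H_n(A^P_*)\to 0$ and conclude with \cref{lem:partial-resolution} at $m=n+1$.

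Your separate treatment of $n=0$ via the $0$-$1$-$2$ Lemma is a worthwhile addition. The construction in \Cref{wall-resolutions} fixes $n$ positive, and the paper only disposes of $n=0$ later, in the proof of \Cref{thm:mainthm}. Setting $n=1$ aside is unnecessary, since \cref{KerFinGen} already covers it.
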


\begin{proof}

Keep the notations of the previous sections. The combination of \cref{KerFinGen} and \cref{lem:MGfinite} implies that $H_n(A^P_*)$ is finitely generated as a $\Z P$-module. By definition we have a short exact sequence of $\Z P$-modules
\[
0 \to \im(A^P_{n+1} \to A^P_n) \to \ker(A^P_n \to A^P_{n-1}) \to H_n(A^P_*) \to  0.
\]
Since $A^P_{n+1}$ is a finite-rank free $\Z P$-module, $\im(A^P_{n+1} \to A^P_n)$ is a finitely generated $\Z P$-module.
It follows that $\ker(A^P_n \to A^P_{n-1})$ is a finitely generated $\Z P$-module as well.
Therefore, 
applying \cref{lem:partial-resolution} to the augmented partial resolution of the trivial $\Z P$-module $\Z$ by finite-rank free $\Z P$-modules
\[
A^P_n\longrightarrow A^P_{n-1}\longrightarrow\cdots
\longrightarrow A^P_0\longrightarrow\Z\longrightarrow0
\]
we get that
$G\times_{Q}F\in\FP_{n+1}$.
\end{proof}

\section{General fibre product}\label{general-fibre}
    
We use the following theorem of Kochloukova and Lima.

\begin{thm}[{\cite[Theorem~3.4]{KocLim}}]\label{thm:KL-transfer}
Let $n \geq 1$ and consider the commutative diagram of groups having exact rows
\[
\begin{tikzcd}
	1 & A & {B_0} & {C_0} & 1 \\
	1 & A & B & C & 1
	\arrow[from=1-1, to=1-2]
	\arrow[from=1-2, to=1-3]
	\arrow[equal, from=1-2, to=2-2]
	\arrow[from=1-3, to=1-4]
	\arrow[from=1-3, to=2-3]
	\arrow[from=1-4, to=1-5]
	\arrow[from=1-4, to=2-4]
	\arrow[from=2-1, to=2-2]
	\arrow[from=2-2, to=2-3]
	\arrow[from=2-3, to=2-4]
	\arrow[from=2-4, to=2-5]
\end{tikzcd}\]
with $A\in\FP_n$, $B_0\in\FP_{n+1}$, and $C\in\FP_{n+1}$. Then
$B\in\FP_{n+1}$.
\end{thm}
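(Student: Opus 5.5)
The plan is to use the Bieri--Eckmann criterion for $\FP_m$ in terms of homology with coefficients in direct products, together with the Lyndon--Hochschild--Serre spectral sequence for the bottom row $1\to A\to B\to C\to 1$. Recall the criterion: a group $\Gamma$ is $\FP_m$ if and only if, for every index set $I$, the natural map $H_k(\Gamma;\prod_I \Z\Gamma)\to \prod_I H_k(\Gamma;\Z\Gamma)$ is an isomorphism for $k<m$ and an epimorphism for $k=m$. Since $H_k(\Gamma;\Z\Gamma)=0$ for $k\geq1$, for $m=n+1$ this reduces to showing $H_k(B;\prod_I\Z B)=0$ for $1\leq k\leq n$.

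\textbf{Step 1: the LHS spectral sequence for $B$.} Consider
\[
E^2_{p,q}=H_p\bigl(C;H_q(A;\textstyle\prod_I\Z B)\bigr)\Rightarrow H_{p+q}(B;\textstyle\prod_I \Z B).
\]
Since $\Z B$ is free over $\Z A$, we have $H_q(A;\Z B)=0$ for $q\geq1$. Because $A\in\FP_n$, the criterion for $A$ gives $H_q(A;\prod_I\Z B)=0$ for $1\leq q\leq n-1$, and in degree $0$ it gives $\prod_I \Z C$.

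The rows $1\leq q\leq n-1$ therefore vanish. On the row $q=0$ we have $E^2_{p,0}=H_p(C;\prod_I\Z C)$, which vanishes for $1\leq p\leq n$ because $C\in\FP_{n+1}$. So in total degrees $1,\dots,n$ the only possibly nonzero term is
\[
E^2_{0,n}=H_n\bigl(A;\textstyle\prod_I\Z B\bigr)_C .
\]
The only differential that can hit it is the transgression $d^{n+1}\colon H_{n+1}(C;\prod_I\Z C)\to E^{n+1}_{0,n}$. Thus $B\in\FP_{n+1}$ if and only if this transgression is surjective onto the coinvariants $H_n(A;\prod_I\Z B)_C$.

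\textbf{Step 2: the same analysis for $B_0$.} Run the identical computation for $1\to A\to B_0\to C_0\to 1$, with coefficients $\prod_I \Z B_0$. Here we know $B_0\in\FP_{n+1}$, so $H_n(B_0;\prod_I\Z B_0)=0$. This forces $H_n(A;\prod_I\Z B_0)_{C_0}$ to be exhausted by the images of the differentials coming from the row $q=0$. Because $C_0$ carries no finiteness hypothesis, intermediate row-zero terms $H_p(C_0;\prod_I\Z C_0)$ may survive as well, so this should be phrased as a statement about the filtration of $H_n(A;\prod_I\Z B_0)_{C_0}$.

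\textbf{Step 3: transfer along $B_0\to B$.} Use naturality of the LHS spectral sequence with respect to the map of extensions, which is the identity on $A$. As $A$-modules, both $\Z B$ and $\Z B_0$ are direct sums of copies of $\Z A$ indexed by cosets. I would therefore replace the families $\prod_I\Z B$ by $\prod_I\bigoplus_{J}\Z A$ with $J$ arbitrary, and apply Step 2 with $I$ enlarged to absorb the cardinality of $J$. The aim is to show that every class in $H_n(A;\prod_I\Z B)_C$ is the image of a class from the $B_0$-side that dies there, and hence dies in $E^\infty_{0,n}$ for $B$.

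\textbf{Main obstacle.} Step 3 is where I expect the difficulty. One has to control how the $C_0$-action and the transgression compare with the $C$-action, and one has to handle the extra row-zero contributions from $C_0$. The first thing I would try is to work at chain level instead: use Wall resolutions as in \cref{Wall}, with the same resolution of $A$ for both extensions, and compare the column $p=0$ modules $E^2_{0,n}$ by base change $\Z B\otimes_{\Z B_0}-$. This mirrors the identification $E^2_{0,n}(G)\cong\Z G\otimes_{\Z P}E^2_{0,n}(P)$ in the proof of \cref{KerFinGen}. Finite generation over $\Z B_0$ would then transfer to finite generation over $\Z B$, and \cref{lem:partial-resolution} would finish the argument.
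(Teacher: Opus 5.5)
Your Steps 1 and 2 are sound reductions, with two small repairs. First, $\prod_I\Z B$ is not a product of copies of $\Z A$, so you need the general Bieri--Eckmann statement that $H_q(A;-)$ commutes with arbitrary direct products for $q<n$. Second, the degree-$0$ condition ($B$ finitely generated) should be recorded. In Step 2 your worry about row $0$ is unfounded. Rows $1,\dots,n-1$ vanish, so $d^{n+1}$ is the only differential entering $E_{0,n}$, and $E^\infty_{0,n}\subseteq H_n(B_0;\prod_I\Z B_0)=0$. Hence the transgression for $B_0$ is onto $H_n(A;\prod_I\Z B_0)_{C_0}$ outright.

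The genuine gap is Step 3, which carries the whole content of the theorem and is not proved. The mechanism you sketch would not work. $\prod_I\bigoplus_J\Z A$ is only an $A$-module, so $H_n(B_0;-)$, the $C_0$- and $C$-coinvariants, and the transgressions are not defined for it. Also, a direct sum over $J$ cannot be absorbed into a product by enlarging $I$. What is missing is a comparison of coefficient modules that is equivariant along $\rho\colon B_0\to B$, so that naturality of the Lyndon--Hochschild--Serre spectral sequence can be used.

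Here is one way to close the gap within your framework.
\begin{itemize}
\item Let $M=\prod_I\Z B$, viewed as a right $\Z B_0$-module via $\rho$. Let $T$ be a set of representatives of the cosets $x\rho(B_0)$ in $B$. Put $M_0=\prod_I\bigoplus_{T}\Z B_0$ and define $\theta\colon M_0\to M$ by $(m_{i,x})\mapsto(\sum_{x}x\,\bar\rho(m_{i,x}))_i$. When $C_0\to C$ is onto, as in the paper's application, this is just $\bar\rho\colon\prod_I\Z B_0\to\prod_I\Z B$.
\item $\theta$ is $B_0$-linear and has a right $\Z A$-linear section: pick $b_c\in B_0$ over each coset $c$ of $A$ in $\rho(B_0)$ and send $\rho(b_c)a\mapsto b_ca$. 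Hence $H_n(A;\theta)$ is onto, and so is the induced map $\beta\colon H_n(A;M_0)_{C_0}\to H_n(A;M)_C$ on $E^{n+1}_{0,n}=E^2_{0,n}$.
\item $M_0$ is a product of free $\Z B_0$-modules, so $H_n(B_0;M_0)=0$ because $B_0\in\mathrm{FP}_{n+1}$. Your Step 2 then applies verbatim and shows the transgression $\tau_0$ for $(B_0,M_0)$ is onto.
\item The morphism of spectral sequences induced by $(\rho,\theta)$ gives $\tau\circ\alpha=\beta\circ\tau_0$, where $\alpha$ is the induced map on $E^{n+1}_{n+1,0}$. Since $\beta$ and $\tau_0$ are onto, $\tau$ is onto, which is exactly what Step 1 requires.
\end{itemize}

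Your chain-level fallback would also need real work that is absent. You would need a chain map of truncated Wall complexes over $\rho$, which requires $C_0\in\mathrm{FP}_{n+1}$ and enlarging the truncation for $B$ to contain the image of the one for $B_0$. Finally, the paper gives no proof of this statement; it is quoted from \cite{KocLim}.
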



\begin{proof}[Proof of \Cref{thm:mainthm}]
The case $n=0$ is the $0$-$1$-$2$ Lemma (\cite[Lemma 3.8]{Kuc12}), so we may assume $n\ge1$. 

Since $G_2\in\FP_{n+1}$, it is finitely generated, so there exists a finite-rank free group
$F$ and a surjective group homomorphism $p \colon F \to G_2$. 
We can then form the commutative diagram with exact rows
\[\begin{tikzcd}
	1 & {N_1} & {G_1 \times_Q F} & F & 1 \\
	1 & {N_1} & {G_1 \times_Q G_2} & {G_2} & 1
	\arrow[from=1-1, to=1-2]
	\arrow[from=1-2, to=1-3]
	\arrow[equal, from=1-2, to=2-2]
	\arrow[from=1-3, to=1-4]
	\arrow["{(\mathrm{id}_{G_1}, p)}", from=1-3, to=2-3]
	\arrow[from=1-4, to=1-5]
	\arrow["p", from=1-4, to=2-4]
	\arrow[from=2-1, to=2-2]
	\arrow[from=2-2, to=2-3]
	\arrow[from=2-3, to=2-4]
	\arrow[from=2-4, to=2-5]
\end{tikzcd}\]
where the group homomorphism from $F$ to $Q$ is $\phi_2 \circ p$. Apply \Cref{prop:free-factor} and \Cref{thm:KL-transfer}.
\end{proof}

\section{Disproving the Homological $n$-$(n+1)$-$(n+2)$ Conjecture}\label{sec:counter}

For a group $G_{1}$ and a subgroup $S\leqslant G_{1}$, we denote
by $\mathrm{HNN}(G_{1},S)$ the HNN extension corresponding to the
identity map $\mathrm{id}_{S}$. By \cite[Proposition 2.13(b)]{Bieri1981},
if $G_{1}$ is $\mathrm{FP}_{n}$ for some nonnegative integer $n$,
then $\mathrm{HNN}(G_{1},S)$ is $\mathrm{FP}_{n}$ if and only if
$S$ is $\mathrm{FP}_{n-1}$. We will repeatedly use the fact 
\[
\mathrm{HNN}(G_{1},S)\cong G_{1}\ltimes\fr{G_{1}/S}
\]
where $\fr{G_{1}/S}$ is the free group on the set $G_{1}/S$, and
the action is by permuting the free basis. 

\begin{proof}[Proof of \Cref{ex:counter}]
Let $Q$ be an $\mathrm{FP}_{\infty}$ group that is not finitely
presented \cite{BestvinaBrady97}, and let $\pi:F\twoheadrightarrow Q$
be a surjection, where $F$ is a finitely generated free group. Set
$A_{0}=F$, $p_{0}=\pi$ and $S_{0}=\graph{p_{0}}=\left\{ (x,\pi(x))\in F\times Q\middle|x\in F\right\} $.
Define recursively 
\[
A_{n+1}=\mathrm{HNN}(A_{n}\times Q,S_{n})
\]
and 
\[
S_{n+1}=\graph{p_{n+1}},
\]
where $p_{n+1}:A_{n+1}\twoheadrightarrow Q$ is defined on $A_{n}\times Q$
by the projection onto $Q$, and by sending the stable letter to $1_{Q}$.
Observe $p_n$ is surjective for every $n$, that $S_{n}\cong A_{n}$ and that both $Q$ and $F$ are $\mathrm{FP}_{\infty}$.
It is therefore easy to see by induction (using \cite[Proposition 2.13(b)]{Bieri1981} and the fact property $\mathrm{FP}_\infty$ is preserved under extensions, and in particular under direct products)
that $A_{n}$ is $\mathrm{FP}_{\infty}$ for every $n$. Under the identification
\[
A_{n+1}\cong(A_{n}\times Q)\ltimes\fr{\left(A_{n}\times Q\right)/S_{n}},
\]
the map $p_{n+1}$ is given by first quotienting out $\fr{A_{n}\times Q/S_{n}}$
and then projecting onto the $Q$ coordinate. We therefore get 
\[
\ker p_{n+1}=(A_{n}\times\left\{ 1\right\} )\ltimes\fr{\left(A_{n}\times Q\right)/S_{n}}.
\]
The action $A_{n}\times\left\{ 1\right\} \curvearrowright(A_{n}\times Q)/S_{n}$
is transitive with stabiliser $\ker p_{n}$, so 
\[
\ker p_{n+1}\cong A_{n}\ltimes\fr{A_{n}/\ker p_{n}}\cong\mathrm{HNN}(A_{n},\ker p_{n}).
\]
Since $A_{n}$ is $\mathrm{FP}_{\infty}$, one proves inductively
(by \cite[Proposition 2.13(b)]{Bieri1981}) that $\ker p_{n}$ is
$\mathrm{FP}_{n}$ for every $n$. We now need to show (for every
$n$) that $A_{n}\times_{Q}F$ is not $\mathrm{FP}_{n+1}$ . 

Since the map $p_{n+1}:A_{n+1}\twoheadrightarrow Q$ is trivial on the normal subgroup $\fr{\left(A_{n}\times Q\right)/S_{n}}$, we have
\[
A_{n+1}\times_{Q}F\cong\left((A_{n}\times Q)\times_{Q}F\right)\ltimes\fr{\left(A_{n}\times Q\right)/S_{n}}.
\]
We moreover have $(A_{n}\times Q)\times_{Q}F\cong A_{n}\times F$,
so 
\[
A_{n+1}\times_{Q}F\cong\left(A_{n}\times F\right)\ltimes\fr{\left(A_{n}\times Q\right)/S_{n}},
\]
where the action of $A_{n}\times F$ on $\left(A_{n}\times Q\right)/S_{n}$
is given by
\[
(x,y).(a,b)S_n=(xa,\pi(y)b)S_{n}.
\]
This is a transitive action with stabiliser 
\begin{align*}
\left\{ (x,y)\in A_{n}\times F\middle|(x,\pi(y))\in S_{n}\right\}  & =\left\{ (x,y)\in A_{n}\times F\middle|p_{n}(x)=\pi(y)\right\} ,
\end{align*}
i.e., the stabiliser is exactly $A_{n}\times_{Q}F$. We therefore
get
\[
A_{n+1}\times_{Q}F\cong\mathrm{HNN}(A_{n}\times F,A_{n}\times_{Q}F).
\]
We now prove by induction that $A_{n}\times_{Q}F$ is not $\mathrm{FP}_{n+1}$.
For $n=0$, we have $A_{0}\times_{Q}F=F\times_{Q}F$, which is isomorphic
to $F\ltimes\ker\pi$ (via $(x,y)\mapsto(x,x^{-1}y)$). It is not
finitely generated since, if $(a_{1},b_{1}),\dots,(a_{r},b_{r})$
generated $F\ltimes\ker\pi$, we would get that $b_{1},\dots,b_{r}$
normally generate $\ker\pi$, and hence that $Q=F/\ker\pi$ is finitely
presented, a contradiction. Now, assume $A_{n}\times_{Q}F$ is not
$\mathrm{FP}_{n+1}$ for some nonnegative integer $n$. Since $A_{n}\times F$
is $\mathrm{FP}_{\infty}$ and $A_{n}\times_{Q}F$ is not $\mathrm{FP}_{n+1}$,
we get (by \cite[Proposition 2.13(b)]{Bieri1981}) that $A_{n+1}\times_{Q}F\cong\mathrm{HNN}(A_{n}\times F,A_{n}\times_{Q}F)$
is not $\mathrm{FP}_{n+2}$, as needed.
\end{proof}

\printbibliography
\vspace{0.5cm}

\noindent{\textsc{Department of Mathematics, Weizmann Institute of Science, 234 Herzl Street, Rehovot 7610001, Israel}}

\vspace{0.5cm}

\noindent{\textit{Email address:} \texttt{tal.cohen@weizmann.ac.il}}

\noindent{\textit{Email address:} \texttt{mark.shusterman@weizmann.ac.il}}
\end{document}